  \def\m{{\mathfrak{m}}}
\def\Spec{{\mathrm{Spec\; }}}
\def\Tor{\operatorname{Tor}}
\DeclareMathOperator{\cd}{cd}
\DeclareMathOperator{\pd}{pd}
\DeclareMathOperator{\st}{st}
\DeclareMathOperator{\reg}{reg}
\DeclareMathOperator{\depth}{depth}
\theoremstyle{plain}
\newtheorem{theorem}{Theorem}[section]
\newtheorem{corollary}[theorem]{Corollary}
\newtheorem{proposition}[theorem]{Proposition}
\newtheorem{definition}[theorem]{Definition}
\newtheorem{discussion}[theorem]{Discussion}
\newtheorem{lemma}[theorem]{Lemma}
\newtheorem{question}[theorem]{Question}
\newtheorem{eg}[theorem]{Example}
\newtheorem{remark}[theorem]{Remark}
\newtheorem{rem}[theorem]{Remark}
\newtheorem{algorithm}[theorem]{Algorithm}
\begin{document} 

\title{Bounds on the regularity and projective dimension of ideals associated to graphs}
\date{\today}

\author[Dao]{Hailong Dao}
\address{Department of Mathematics, University of Kansas,
Lawrence, KS 66045-7523, USA} \email{hdao@math.ku.edu}

\author[Huneke]{Craig Huneke}
\address{Department of Mathematics, University of Kansas,
Lawrence, KS 66045-7523, USA} \email{huneke@math.ku.edu}

\author[Schweig]{Jay Schweig}
\address{Department of Mathematics, University of Kansas,
Lawrence, KS 66045-7523, USA}
\email{jschweig@math.ku.edu}

\subjclass[2000]{Primary 13-02, 13F55, 05C10}

\baselineskip 16pt \footskip = 32pt

\begin{abstract}
In this paper we give new upper bounds on the regularity of edge ideals 
whose resolutions are $k$-steps linear; surprisingly,  the bounds are logarithmic in the number of variables. We also
give various bounds for the projective dimension of such ideals, generalizing other recent results. By Alexander duality, our results also
apply to unmixed square-free monomial ideals of codimension two. We also discuss and connect these results
to more classical topics in commutative algebra. 
\end{abstract}

\thanks{The first and second authors gratefully acknowledge support of
the NSF during this research. The first author was supported on DMS-0834050 and the second
author was supported on DMS-1063538.}

\maketitle \markboth{H. Dao,  C.~Huneke, and J. Schweig}
{Bounds on regularity and projective dimension}

\section{Introduction}
\bigskip

In this work we give new upper bounds on the projective dimension and regularity of a certain class of square-free monomial ideals associated to graphs
which improve on existing literature. Our original motivation was purely algebraic:

\begin{question}\label{ques1}
Let $I$ be an ideal in a polynomial or regular local ring $S$ of dimension $n$, such that the quotient $S/I$ satisfies Serre's condition  $S_k$ for some integer $k \geq 2$. Can one find an upper bound on $\cd(I,S)$, the  cohomological dimension of $I$  (i.e. the least integer $q$ such that $H_I^i(S)=0$ for $i>q$)?   

\end{question}

Estimates on cohomological  dimension have been studied in depth by a number of researchers (see \cite{F}, \cite{Ha}, \cite{L1}, \cite{O}, \cite{PS}). For example,
when $I$ is an ideal in a regular local ring
$S$ of dimension $n$, Faltings \cite{F} showed that $\cd(I, S) \leq n -\left\lfloor \frac{n-1}{b}\right\rfloor$, where $b$ is the bigheight of $I$.
When  $S/I$ is normal (equivalently $R_1$ and $S_2$), it is shown in \cite[Theorem 5.1(iii)]{HL} that the cohomological dimension satisfies a stronger bound:

$$ \cd(I,S) \leq n  -\frac{2n-1}{b+1}.$$

At the time, the two authors of \cite{HL} speculated whether stronger bounds on the cohomological dimension could be found if it is required that the quotient ring $S/I$ satisfies Serre's property $S_k$ for some $k\geq 2$.  However, even now very little is known about Question  \ref{ques1}. In this paper we give upper bounds for the first non-trivial special case, when $I$ is a square-free monomial ideal of height $2$ in a polynomial ring over a field. When $I$ is squarefree monomial, its cohomological dimension and projective dimension are equal. Surprisingly, the bound we obtain is \emph{logarithmic}. To be precise, we prove (see Theorem \ref{mainThm}  and Corollary \ref{mainCor}) that in this situation, if $S/I$ satisfies Serre's condition $S_k$ for some $k\geq 2$, then 

$$ \cd(S,I) = \pd(S/I) \leq \log_{\frac{k+3}{2}}\left( \frac{(n - 1) \ln \left( \frac{k+3}{2}\right)}{k} + \frac{2}{k+3} \right) +3.$$

This suggests that the bounds in \cite{HL} may be improved, at least asymptotically and with suitable assumptions (see the discussions in Section \ref{open}).
We hope to come back to this theme in future works. 

There are also several reasons why the bounds we achieve, and the methods we use, may be of interest from a combinatorial or computational point of view.  For instance, the squarefree monomial ideals whose quotients satisfy $S_k$ have recently been studied by Murai and Terai \cite{MT}, who showed that the $h$-vectors of their corresponding Stanley-Reisner complexes display nice non-negativity behavior.

{\color{red}{}}
Considering the Alexander dual of a monomial ideal brings up other interesting connections.  Through this duality, the problem of bounding the projective dimension of monomial ideals is the same as the problem
of bounding the regularity of their Alexander duals, which are also monomial ideals.  Indeed, the condition that $S/I$ satisfies $S_k$ is equivalent to the condition that the Alexander dual of $I$ has a minimal free resolution
that is $(k-1)$-steps linear. For example, $S/I$ is $S_2$ if and only if the Alexander dual of $I$ has a linear presentation. 
The famous example of Mayr and Meyer (\cite{MM}) shows that, in general, regularity can
be doubly exponential in the degrees of generators of the ideal as well as the number of variables and equations defining $I$.  However,
a question the second author has raised for several years is whether this type of phenomenon is possible if one accounts for
the degrees of the first syzygy of the ideal (that is, if one is also given the degrees in the presentation matrix of $I$). 
\footnote{In a conversation of the second author with Jason McCullough, we realized that we know of no example  of a
homogeneous ideal $I$ in a polynomial ring in $n$ variables in which the regularity is larger than $\frac{n}{2}\cdot N$, where $N$ is
the largest regularity coming from the generators and syzygies of the ideal: $N = max\{j-i : \Tor_i(S/I,K)_j\ne 0, \quad
i\leq 2\}$.}

Thus, our main result on regularity, Theorem \ref{mainThm}, at least in the case of edge ideals of graphs, supports the general philosophy that the first syzygies have ``most" of the regularity in them. 

It should be noted that the projective dimension and regularity of Stanley-Reisner ideals, or more specifically edge ideals, have attracted a lot of attention;
see \cite{DE, FVT, HVT, HVT1, K, MS, MT} and references given therein. However, to the best of our knowledge, Question \ref{ques1} has not been investigated in this context.  Thus, the methods we develop here can be used to sharpen known results in this area. 

Yet another reason to understand monomial ideals whose quotients satisfy $S_2$ is provided by an interesting result of Kalkbrenner and Sturmfels \cite{KS} on initial ideals of homogenous prime ideals. See the discussion around Question \ref{qKS}.

Our paper is organized as follows (for any unexplained terminology, see \cite{E} or \cite{MS}).
In Section 2 we present basic material we will use throughout the paper.  Of particular interest is Lemma \ref{exact}, which gives  a recursive formula for regularity of square-free monomial ideals in general.   

Section 3 concerns regularity and
proves some results on what we call gap-free graphs. We end the section by showing the efficacy of our approach and giving a simple proof
of a theorem of Nevo (\cite{N}) that if $G$ is both claw-free and gap-free then the regularity of
the associated edge ideal $I(G)$ is at most three.

Section 4 contains the proofs of Theorems \ref{log} and \ref{mainThm}, our main results bounding the regularity of edge ideals whose resolutions are $k$-steps linear. One of the novel features of this section is Lemma \ref{bootstrap}.  Given a function of a graph's maximal degree that bounds the regularity of the associated ideal, Lemma \ref{bootstrap} gives a new bound for the regularity which is a function of the number of vertices of the graph. Thus, this lemma is of independent interest and should be useful in bounding regularity for edge ideals of other classes of graphs.

In Section 5
we turn to the problem of bounding the projective dimension of $I(G)$ for general graphs, generalizing some
recent work of Dochtermann and Engstr\"om.  Our first result gives a bound in terms of what we call the
\emph{maximal edge degree} of a graph (Theorem \ref{boundpd}), while Theorem \ref{clawfreepd} refines this result in the case when $G$ is claw-free. Our proof relies on an observation, Lemma \ref{lyu},  which gives a recursive formula for the projective dimension and should be of independent interest.

We hope that this paper will further inspire more collaborations between researchers who work in the intertwined fields of commutative algebra, combinatorial algebra and combinatorial topology. With this purpose in mind, we conclude this paper with Section 6; a discussion of open problems and further research directions.   

\thanks{ We thank David Eisenbud, Chris Francisco, Tai Huy Ha, Manoj Kummini, Gennady
Lyubeznik, Jeff Mermin, Alex Engstr\"om,  Sergey Norin, Gwyn Whieldon for valuable conversations. We also benefitted from
questions and answers on the website Mathoverflow.net.}

\bigskip

\section{Preliminaries}


Throughout this paper, we let $G$ be a finite, simple graph with vertex set $V(G)$. 
For $v, w \in V(G)$, we write $d(v,w)$ for the \emph{distance} between $v$ and $w$, the
fewest number of edges that must be traversed to travel from $v$ to $w$.  For example, $d(v, w) = 1$
if and only if $(v, w)$ is an edge of $G$.  

A subgraph $G' \subseteq G$ is called \emph{induced} if $(v,w)$ is an edge of $G'$ whenever $v$ and $w$ are vertices of $G'$ and $(v, w)$ is an edge of $G$.

The \emph{complement} of a graph $G$, for which we write $G^c$, is the graph on the same vertex set in which $(x, y)$ is an edge of $G^c$ if and only if it is not an edge of $G$.  

Finally, we let $C_k$ denote the cycle on $k$ vertices, and we let $K_{m, n}$ denote the complete bipartite graph with $m$ vertices on one side, and $n$ on the other.  

\begin{definition}
Let $G$ be a graph.  We say two edges $(w, x)$ and $(y, z)$ form a \emph{gap} in $G$ if $G$ does not have an edge with one 
endpoint in $\{w, x\}$ and the other in $\{y, z\}$.
A graph without gaps is called \emph{gap-free}.  Equivalently, $G$ is gap-free if and only if $G^c$ contains no induced $C_4$.
\end{definition}

Thus, $G$ is gap-free if and only if it does not contain two vertex-disjoint edges as an induced subgraph.  


\begin{definition}
Any graph isomorphic to $K_{1, 3}$ is called a \emph{claw}.
A graph without an induced claw is called \emph{claw-free}.
\end{definition}


If $G$ is a graph without isolated vertices, let $S$ denote the polynomial ring on the vertices of $G$ over some fixed field $k$.  Recall that the \emph{edge ideal} of $G$ is 
\[
I(G) = (xy: (x,y) \text{ is an edge of } G).
\]


\emph{Alexander duality} for monomial ideals can be defined in a number of ways (see \cite[Chapter 5]{MS}). If $I$ is a square-free  monomial ideal,
and 
\[
I = \bigcap_{\text{prime } P \supset I}P,
\]
then the
\emph{Alexander dual} of $I$, denoted $I^{\vee}$, is the square-free monomial ideal generated by all elements $x^P$, where
$x^P$ is the product of all the variables in $P$.

\begin{definition}

Let $S$ be a standard graded polynomial ring over a field $K$. 
Recall that
the (Castelnuovo-Mumford) regularity, written $\reg (M)$, of
a graded $S$-module $M$ (see, for example, \cite{E}) is given by
\[
\reg (M) = \max_m\, \{\Tor_m^S(M,K) -m\}.
\]
\end{definition}

\begin{definition}
A commutative Noetherian ring $R$ is said to satisfy  condition $S_k$ for some integer $k\geq 0$ if for all $P \in \Spec{R}$:
$$ 
\depth R_P \geq \min \{k, \dim R_P\}.
$$
\end{definition}

\begin{definition}
We say that $I(G)$ is \emph{$k$-steps linear} whenever the minimal free resolution of $I(G)$ over the polynomial ring
is linear for $k$ steps, i.e., $\Tor_{i}^S(I(G),K)_j = 0$ for all $1\leq i\leq k$ and all $j\ne i+2$. 
\end{definition}

\begin{theorem} \label{terai}(Terai, \cite{T})
Let $I \subset S$ be a square-free monomial ideal in a polynomial ring over a field. Then
$$
\reg I^{\vee} = \pd (S/I)
$$
As $I^{\vee\vee} =I$, we also have $\reg I = \pd (S/I^{\vee})$.

\end{theorem}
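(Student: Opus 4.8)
The statement is Terai's theorem, and the natural route is purely combinatorial, via the Stanley--Reisner dictionary together with Hochster's formula and combinatorial Alexander duality. First I would write $I = I_\Delta$ for the Stanley--Reisner ideal of a simplicial complex $\Delta$ on a vertex set $V$ with $|V| = n$, and record the standard fact that Alexander duality of ideals matches Alexander duality of complexes, i.e. $I^\vee = I_{\Delta^\vee}$ where $\Delta^\vee = \{\sigma \subseteq V : V \setminus \sigma \notin \Delta\}$. Since $(\Delta^\vee)^\vee = \Delta$ and $(I^\vee)^\vee = I$, it suffices to prove the single equality $\reg I^\vee = \pd(S/I)$; the displayed second equality then follows by applying the first to $I^\vee$ in place of $I$.

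Next I would translate both invariants into reduced simplicial homology using Hochster's formula. For the right-hand side, $\beta_{i,W}(S/I_\Delta) = \dim_K \tilde{H}_{|W|-i-1}(\Delta_W; K)$ for each $W\subseteq V$, where $\Delta_W$ is the induced subcomplex on $W$; hence $\pd(S/I) = \max\{ i : \tilde{H}_{|W|-i-1}(\Delta_W;K) \neq 0 \text{ for some } W\}$. For the left-hand side, the short exact sequence $0 \to I^\vee \to S \to S/I^\vee \to 0$ gives $\beta_{i,W}(I^\vee) = \beta_{i+1,W}(S/I^\vee)$, so Hochster's formula applied to $\Delta^\vee$ yields $\reg I^\vee = 2 + \max\{ p : \tilde{H}_p((\Delta^\vee)_W;K) \neq 0 \text{ for some } W\}$.

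The heart of the proof is to match these two maxima using combinatorial Alexander duality, which states that $\tilde{H}_p(\Gamma^\vee; K) \cong \tilde{H}^{|U|-p-3}(\Gamma; K)$ for a complex $\Gamma$ on a vertex set $U$ (and over a field cohomology and homology have equal dimension). The key combinatorial identity I would establish is that the restriction of the dual is the dual, computed inside $W$, of a link: $(\Delta^\vee)_W = \big(\{\rho \subseteq W : \rho \cup (V\setminus W) \in \Delta\}\big)^\vee$, the inner complex being $\mathrm{link}_\Delta(V\setminus W)$ whenever $V\setminus W \in \Delta$. Feeding this into Alexander duality on the vertex set $W$ rewrites the left-hand maximum in terms of reduced homology of links of $\Delta$, and a careful reindexing (weighing the homological degree against $|W|$ and $|V \setminus W|$) should show it equals the right-hand maximum.

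The main obstacle is precisely this last matching: restrictions of $\Delta^\vee$ correspond to \emph{links} of $\Delta$, not to restrictions, so the two Hochster expressions are not equal term-by-term and one cannot simply pair up Betti numbers. Closing the gap requires either re-expressing $\pd(S/I_\Delta)$ through the link form of Hochster's formula for local cohomology together with the Auslander--Buchsbaum equality $\pd(S/I_\Delta) = n - \depth(S/I_\Delta)$, or a direct combinatorial argument that the two extremal homological degrees agree. Either way, the degenerate cases (the void and irrelevant complexes, and the reduced homology $\tilde{H}_{-1}$ of $\{\emptyset\}$) must be tracked carefully, since they carry exactly the boundary contributions that make the indices align.
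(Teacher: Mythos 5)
The paper does not prove this statement: Theorem \ref{terai} is quoted as a known result with a citation to Terai's paper \cite{T}, so there is no in-paper argument to compare yours against. Judged on its own, your outline is a correct sketch of the standard proof. The translation of both sides through Hochster's formula is right ($\pd(S/I)=\max\{i:\tilde H_{|W|-i-1}(\Delta_W)\neq 0\}$ and $\reg I^{\vee}=\max\{p+2:\tilde H_{p}((\Delta^{\vee})_W)\neq 0\}$, using the shift $\beta_{i,W}(I^{\vee})=\beta_{i+1,W}(S/I^{\vee})$), your key identity $(\Delta^{\vee})_W=\bigl(\mathrm{link}_{\Delta}(V\setminus W)\bigr)^{\vee_W}$ checks out (with the full simplex, hence no reduced homology, appearing when $V\setminus W\notin\Delta$), and you correctly diagnose the real obstacle: restrictions of $\Delta^{\vee}$ dualize to \emph{links} of $\Delta$, so the two Hochster expressions cannot be paired term by term. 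The repair you name is the right one and it does close the gap: Alexander duality on the vertex set $W$ gives $\beta_{i,W}(I^{\vee})=\dim_K\tilde H^{\,i-1}(\mathrm{link}_{\Delta}(V\setminus W))$, so $\reg I^{\vee}=\max\{n-|F|-j-1:\tilde H^{j}(\mathrm{link}_{\Delta}F)\neq 0\}$; on the other side, Hochster's local cohomology formula $\dim_K H^{i}_{\m}(K[\Delta])_{-a}=\dim_K\tilde H^{\,i-|F|-1}(\mathrm{link}_{\Delta}F)$ (where $F$ is the support of $a$) together with $\pd(S/I)=n-\depth(S/I)$ yields exactly the same maximum after setting $j=i-|F|-1$. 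So the sketch is sound; what remains to make it a complete proof is writing out that final index match and the degenerate cases you already flag, rather than any missing idea.
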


The following result is well-known: 

\begin{theorem}\label{inducedcycles}
Let $k \geq 2$.  Then the following are equivalent:
\begin{enumerate}
\item $S/I(G)^\vee$ satisfies Serre's condition $S_{k}$. 
\item $I(G)$ is $(k-1)$-step linear.
\item  $C_i$ is not an induced subgraph of $G^c$ for any $i$ with $4 \leq i \leq k+2$.   

\end{enumerate}
Furthermore, $\reg(I(G)) = 2$ if and only if $G^c$ contains no induced $C_i$ for any $i \geq 4$.  
\end{theorem}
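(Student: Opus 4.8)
The plan is to prove the three equivalences together by reducing everything to the homology of induced subcomplexes, and to obtain the ``furthermore'' claim as the limiting case $k\to\infty$ of $(2)\Leftrightarrow(3)$. The central observation is that the Stanley--Reisner complex of $I(G)$ is the independence complex $\mathrm{Ind}(G)$, and that for every $\sigma\subseteq V(G)$ the restriction $\mathrm{Ind}(G)|_\sigma=\mathrm{Ind}(G[\sigma])$ coincides with the clique (flag) complex $\mathrm{Cl}(G^c[\sigma])$ of the induced subgraph of the complement, since a set is independent in $G$ exactly when it is a clique in $G^c$. Hochster's formula then writes every graded Betti number as $\beta_{i,j}(I(G))=\sum_{|\sigma|=j}\dim_K\widetilde H_{j-i-2}(\mathrm{Ind}(G[\sigma]))$, so the linear strand of $I(G)$ lives in homological homology-degree $0$ (internal degree $j=i+2$) and every off-strand Betti number records nonvanishing reduced homology in positive degree of some $\mathrm{Cl}(G^c[\sigma])$. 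The whole theorem thereby becomes a statement about when such homology is forced to vanish.

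For $(1)\Leftrightarrow(2)$ I would invoke the general Alexander-duality correspondence between Serre's conditions and the length of the linear part of a resolution, which is the monomial fact already quoted in the introduction. The mechanism: Serre's condition $S_k$ for $S/I(G)^\vee$ is captured by the standard cohomological characterization bounding the dimensions of the local cohomology modules $H^i_{\mathfrak m}(S/I(G)^\vee)$ for $i$ below $\dim S/I(G)^\vee$; by graded local duality together with the graded refinement of Terai's duality (Theorem \ref{terai}), these vanishings translate exactly into the vanishing of the off-diagonal entries $\Tor_i^S(I(G),K)_j$ with $j\neq i+2$ for $1\le i\le k-1$, i.e.\ $(k-1)$-step linearity. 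Since this is the general squarefree statement, I would cite it rather than reprove it.

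The easy half of $(2)\Leftrightarrow(3)$ is $\neg(3)\Rightarrow\neg(2)$, an explicit Hochster computation. If $G^c$ has an induced cycle $C_\ell$ on a vertex set $\sigma$ with $4\le\ell\le k+2$, then $G^c[\sigma]=C_\ell$ is triangle-free, so $\mathrm{Ind}(G[\sigma])=\mathrm{Cl}(C_\ell)$ is the $1$-dimensional complex $C_\ell\simeq S^1$ and $\widetilde H_1(\mathrm{Ind}(G[\sigma]))\cong K$. Because the coarse Betti number is a sum of nonnegative homology dimensions, this single contribution forces $\beta_{\ell-3,\,\ell}(I(G))\neq 0$; as $\ell\neq(\ell-3)+2$, this Betti number is off the linear strand at homological step $i=\ell-3\in[1,k-1]$, so $I(G)$ fails to be $(k-1)$-step linear. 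The ``furthermore'' assertion is the case $k=\infty$: a fully linear resolution ($\reg I(G)=2$) forces all positive-degree reduced homology of every $\mathrm{Cl}(G^c[\sigma])$ to vanish, and the computation just given shows any induced $C_i$ with $i\ge4$ obstructs this; this is Fr\"oberg's theorem.

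The hard part is the converse $(3)\Rightarrow(2)$: assuming $G^c$ has no induced cycle of length $4,\dots,k+2$, one must show $\widetilde H_m(\mathrm{Cl}(G^c[\sigma]))=0$ whenever $m\ge1$ and $|\sigma|-m-2\in[1,k-1]$. I would argue by induction with a Mayer--Vietoris/vertex-splitting decomposition: for a vertex $v$ of $H=G^c[\sigma]$ write $\mathrm{Cl}(H)=\overline{\mathrm{star}}(v)\cup\mathrm{del}(v)$ with intersection $\mathrm{link}(v)=\mathrm{Cl}(H[N(v)])$; since $\overline{\mathrm{star}}(v)$ is a cone and hence contractible, the long exact sequence reduces the homology of $\mathrm{Cl}(H)$ to that of the deletion $\mathrm{Cl}(H\setminus v)$ and the link, each on fewer vertices. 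The no-short-induced-cycle hypothesis is what controls these two pieces and caps the homological degree in which a nonzero class can first appear; this is precisely the chordality-type input underlying Fr\"oberg's theorem, and the finite-$k$ statement is its quantitative refinement. The delicate bookkeeping — matching the homology degree $m$ and the cardinality $|\sigma|$ against the allowed cycle lengths through the induction — is the step I expect to be the main obstacle, and where citing the known characterization of the linear strand of edge ideals would be the most economical alternative.
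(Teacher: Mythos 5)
Your handling of $(1)\Leftrightarrow(2)$ (cite the squarefree duality between $S_k$ and $(k-1)$-step linearity) and of the ``furthermore'' statement matches the paper, which likewise proves these purely by citation (to Yanagawa and to Terai/Fr\"oberg respectively). Your Hochster-formula computation for $\neg(3)\Rightarrow\neg(2)$ is correct and is a nice explicit supplement: an induced $C_\ell\subseteq G^c$ on $\sigma$ gives $\widetilde H_1(\mathrm{Cl}(G^c[\sigma]))\cong K$, hence $\beta_{\ell-3,\ell}(I(G))\neq 0$ off the linear strand, and $\ell-3\in[1,k-1]$ exactly when $4\le\ell\le k+2$. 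The paper does not spell this out; it simply cites \cite[Theorem 2.1]{EGHP} for the whole equivalence $(2)\Leftrightarrow(3)$.

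The genuine gap is the direction $(3)\Rightarrow(2)$, which you sketch but do not prove, and the Mayer--Vietoris induction as described does not close on its own. Writing $\mathrm{Cl}(H)$ as the union of the closed star of $v$ and the deletion, the long exact sequence trades $\widetilde H_m(\mathrm{Cl}(H))$ for $\widetilde H_m$ of the deletion and $\widetilde H_{m-1}$ of the link $\mathrm{Cl}(H[N(v)])$. At the bottom of the induction ($m=1$) you therefore need $\widetilde H_0$ of the link to inject into $\widetilde H_0$ of the deletion --- essentially a connectivity statement about the neighborhood of $v$ --- which is a degree-$0$ assertion lying outside your inductive hypothesis (which only covers $m\ge 1$) and which is simply false for an arbitrary choice of $v$ (take $H$ a path $a$--$v$--$b$: the link of $v$ is two isolated points). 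Closing the argument requires a careful choice of $v$ and additional connectivity input; this is exactly the content of \cite[Theorem 2.1]{EGHP}, and is the step you defer as ``delicate bookkeeping.'' Since the paper's own proof is to cite that theorem, the economical fix is to do the same (as you yourself suggest in passing); as written, your proof of the implication $(3)\Rightarrow(2)$, and hence of the ``if'' half of the final statement, is incomplete.
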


\begin{proof} From \cite{Y} it follows that $S/I(G)^\vee$ is $S_{k}$ if and only if $I(G)$ has a linear resolution for at least the first $(k-1)$-steps. By \cite[Theorem 2.1]{EGHP}, this latter condition is equivalent to saying that $C_i$ is not an induced subgraph of $G^c$ for any $i$ with   $4 \leq i \leq k+2$. The last statement
follows at once from Terai's Theorem \ref{terai} (although it was first proven by Fr\"oberg in \cite{Fro}).
\end{proof}

One particularly nice feature about $k$-step linearity is that it behaves well with respect to deletion of induced subgraphs, as shown by the following.

\begin{corollary}\label{removeinduced}
Let $G$ be a graph with $I(G)$ $k$-steps linear, and let $G'$ be an induced subgraph of $G$.  Then $I(G - G')$ is $k$-steps linear. 
\end{corollary}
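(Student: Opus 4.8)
The plan is to reduce the statement to the combinatorial characterization of $k$-step linearity supplied by Theorem \ref{inducedcycles}, specifically the equivalence between condition (2) ($I(G)$ is $k$-steps linear) and condition (3) ($C_i$ is not an induced subgraph of $G^c$ for any $i$ with $4 \leq i \leq k+2$). The key observation I would exploit is that the operation of passing to an induced subgraph interacts cleanly with graph complementation: if $G'$ is an induced subgraph of $G$ on a vertex set $W \subseteq V(G)$, then the complement $(G-G')^c$ is precisely the induced subgraph of $G^c$ on the vertex set $V(G) \setminus W$. This is because, for any two vertices $x,y$ both lying outside $W$, the pair $(x,y)$ is an edge of $(G-G')^c$ if and only if it fails to be an edge of $G-G'$, which (since both vertices survive the deletion) happens if and only if it fails to be an edge of $G$, i.e.\ if and only if $(x,y)$ is an edge of $G^c$.

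With this identification in hand, the argument becomes short. First I would invoke Theorem \ref{inducedcycles} applied to $G$: since $I(G)$ is $k$-steps linear, equivalently $I(G)$ is $(k+1)$-step linear in the indexing there—more carefully, $k$-steps linear corresponds to the hypothesis with the parameter chosen so that $G^c$ has no induced $C_i$ for $4 \leq i \leq k+2$. Next, because $(G-G')^c$ is an induced subgraph of $G^c$, any induced cycle $C_i$ appearing in $(G-G')^c$ would also be an induced cycle of $G^c$ (an induced subgraph of an induced subgraph is induced in the ambient graph). Therefore $(G-G')^c$ contains no induced $C_i$ with $4 \leq i \leq k+2$ either. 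Finally, applying Theorem \ref{inducedcycles} in the reverse direction to the graph $G-G'$ yields that $I(G-G')$ is $k$-steps linear, which is the desired conclusion.

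The step requiring the most care is the bookkeeping of the parameter $k$ between the two equivalent conditions of Theorem \ref{inducedcycles}, since that theorem is stated for $k \geq 2$ and phrases $k$-step linearity against the range $4 \leq i \leq k+2$ of forbidden induced cycles. I would make explicit the correspondence that ``$I(G)$ is $k$-steps linear'' matches the forbidden-cycle range $4 \leq i \leq k+2$, and check that this same numerical range is what controls $I(G-G')$, so that no shift in $k$ is introduced by the deletion. Aside from this indexing, the only genuine mathematical content is the complementation identity $(G-G')^c = (G^c)$ restricted to $V(G)\setminus W$, and the transitivity of the induced-subgraph relation; both are elementary, so I do not anticipate a serious obstacle. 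The proof is essentially a translation of a purely homological statement into the language of forbidden induced subgraphs, where the result is visibly monotone under taking induced subgraphs.
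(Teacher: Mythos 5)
Your proposal is correct and follows essentially the same route as the paper: the paper's proof is exactly the observation that any induced cycle in $(G-G')^c$ is an induced cycle in $G^c$, combined with Theorem \ref{inducedcycles}. The only quibble is a harmless off-by-one in your bookkeeping (in the theorem as stated, $(k-1)$-step linearity corresponds to forbidding $C_i$ for $4 \leq i \leq k+2$, so $k$-step linearity forbids $4 \leq i \leq k+3$), which does not affect the argument since the same range appears on both sides.
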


\begin{proof}
Because $G'$ is an induced subgraph, any induced cycle in $(G-G')^c$ would also be an induced cycle in $G^c$, and the result now follows from Theorem \ref{inducedcycles}.
\end{proof}

\begin{corollary}
The ideal $I(G)$ is $1$-step linear if and only if $G$ is gap-free.
\end{corollary}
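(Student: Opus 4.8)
The plan is to deduce this directly from Theorem~\ref{inducedcycles}, specialized to the smallest admissible value $k=2$. Since that theorem requires only $k\geq 2$, the case $k=2$ is legitimate, and there I would read off the equivalence of its parts (2) and (3): the statement that $I(G)$ is $(k-1)$-step linear becomes precisely the statement that $I(G)$ is $1$-step linear, and the statement that $C_i$ is not an induced subgraph of $G^c$ for any $i$ with $4\leq i\leq k+2$ collapses, since $k+2=4$, to the single condition that $C_4$ is not an induced subgraph of $G^c$.

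The remaining step is to recognize that this surviving condition is exactly the definition of gap-freeness. Indeed, the definition of \emph{gap} records that $G$ is gap-free if and only if $G^c$ contains no induced $C_4$, so the combinatorial condition produced above is verbatim the gap-free hypothesis. Chaining the two equivalences then yields that $I(G)$ is $1$-step linear if and only if $G$ is gap-free.

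There is essentially no obstacle here: the result is a direct corollary, obtained by substituting a single numerical value into an already-established theorem and invoking a definition. The only point requiring a moment's care is confirming that the index range $4\leq i\leq k+2$ genuinely reduces to the single value $i=4$ when $k=2$, so that no additional induced cycles need to be excluded beyond $C_4$; once that is checked the equivalence follows immediately.
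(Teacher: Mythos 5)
Your proof is correct and is exactly the argument the paper intends (the paper states this corollary without proof, immediately after Theorem \ref{inducedcycles}): set $k=2$ in that theorem, so that the equivalence of (2) and (3) reduces to ``$I(G)$ is $1$-step linear iff $G^c$ has no induced $C_4$,'' which is the definition of gap-free. Nothing further is needed.
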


Our proofs in the next two Sections make heavy use of the following lemma, whose last statement appears to be new and of independent interest (although in special cases such as edge ideals of chordal graphs, more information can be obtained: see, for example, \cite[3.6, 3.7]{HVT}).  

\begin{lemma}\label{exact}
Let $I \subseteq S$ be a monomial ideal, and let $x$ be a variable appearing in $I$.  Then
\[
\reg(I) \leq \max\{ \reg (I : x) + 1, \reg (I,x)\}. 
\]
Moreover, $\reg(I)$ is {\it equal} to one of
these terms.
\end{lemma}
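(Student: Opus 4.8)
The plan is to derive everything from the standard short exact sequence
\[
0 \longrightarrow (S/(I:x))(-1) \xrightarrow{\ \cdot x\ } S/I \longrightarrow S/(I,x) \longrightarrow 0,
\]
together with the elementary identity $\reg(J) = \reg(S/J) + 1$, valid for every proper monomial ideal $J$. The map $\cdot x$ is injective with cokernel $S/(I,x)$, so the sequence is exact. From the long exact sequence in $\Tor(-,K)$ one reads off the two usual regularity inequalities attached to a short exact sequence $0 \to A \to B \to C \to 0$: the middle bound $\reg B \le \max\{\reg A, \reg C\}$ and the left bound $\reg A \le \max\{\reg B, \reg C + 1\}$. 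Noting that $\reg\big((S/(I:x))(-1)\big) = \reg(S/(I:x)) + 1 = \reg(I:x)$, the middle bound applied to the displayed sequence and translated back to ideals gives $\reg(I) = \reg(S/I)+1 \le \max\{\reg(I:x)+1,\ \reg(I,x)\}$, which is the first assertion.

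For the ``moreover'' statement I would set $a = \reg(S/(I:x))+1 = \reg(I:x)$, $b = \reg(S/I) = \reg(I)-1$, and $c = \reg(S/(I,x)) = \reg(I,x)-1$, so that the two candidate values for $\reg(I)$ are $a+1$ and $c+1$ and the claim becomes $b \in \{a,c\}$. The two inequalities above read $b \le \max\{a,c\}$ and $a \le \max\{b, c+1\}$. A direct check with just these two relations shows that the only way to have $b \notin \{a,c\}$ is for $b < c$ to hold: if $a \le c$ the middle bound already forces $b < c$, while if $a > c$ the left bound forces $c \ge a-1$, hence $c = a-1$ and then $b < a = c+1$ again gives $b < c$. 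Thus the entire content of the equality statement is the exclusion of the possibility $\reg(S/I) < \reg(S/(I,x))$.

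To exclude it I would prove the monotonicity $\reg(S/(I,x)) \le \reg(S/I)$, i.e. $b \ge c$, which directly contradicts $b < c$. In the squarefree case that drives the paper this is transparent via Hochster's formula: writing $I = I_\Delta$, the ideal $(I,x)$ is the Stanley--Reisner ideal of the restriction $\Delta_{V \setminus x}$, an induced subcomplex, and Hochster's formula expresses $\reg(S/I_\Delta)$ as one plus the largest $\ell$ for which $\tilde H_\ell(\Delta_W; k) \ne 0$ for some $W \subseteq V$. Since the subcomplexes relevant to $\reg(S/(I,x))$ range only over $W \subseteq V \setminus x$, the maximum cannot increase, giving $b \ge c$. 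Combined with the previous paragraph this forces $b \in \{a,c\}$, i.e. $\reg(I) \in \{\reg(I:x)+1,\ \reg(I,x)\}$. For an arbitrary monomial ideal one reduces to the squarefree case by polarization, which preserves regularity.

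The main obstacle is precisely this monotonicity step. The two generic short-exact-sequence inequalities genuinely permit the configuration $a = c+1$, $b \le c-1$, so the ``equal to one of the terms'' conclusion is \emph{false} for arbitrary short exact sequences and must exploit a special feature of the multiplication-by-$x$ sequence; the clean such feature is that $(I,x)$ corresponds to restriction to an induced subcomplex. An alternative worth examining is whether the factorization of multiplication by $x$ on $S/I$ through $(S/(I:x))(-1)$ constrains the connecting maps in the long exact sequence enough to prevent cancellation in top degree, but tracking those maps directly appears more delicate than the combinatorial bound, so I would present the Hochster argument as the main route.
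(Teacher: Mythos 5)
Your argument is correct and shares the paper's skeleton --- the same short exact sequence for the inequality, and the same pivotal monotonicity $\reg(I,x)\le\reg(I)$ --- but both halves of the ``moreover'' part are executed differently. For the endgame, the paper does not invoke the generic bound $\reg A\le\max\{\reg B,\reg C+1\}$; instead it locates a multidegree $\sigma$ with $\beta_{j,\sigma}(I:x)\ne 0$ realizing $\reg(I:x)$, sets $\tau=\sigma\cup\{x\}$, and chases the long exact sequence in that multidegree to contradict $\reg(I:x)+1>\reg(I)>\reg(I,x)$. Your observation that the coarse, non-multigraded left inequality already rules out every configuration except $\reg(S/I)<\reg(S/(I,x))$ is a genuine streamlining: it replaces the multidegree chase with two lines of arithmetic. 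For the monotonicity itself, the paper argues directly with multigraded Betti numbers, writing $(I,x)=(L,x)$ where $L$ is generated by the generators of $I$ not divisible by $x$, and citing Kummini's lemma to transfer nonvanishing Tor from $(I,x)$ back to $I$; you instead use Hochster's formula (restriction to an induced subcomplex cannot increase the top nonvanishing reduced homology) together with polarization. Both are valid; the paper's route handles arbitrary monomial ideals in one stroke, whereas your polarization step deserves one explicit sentence --- you need that the generators of $I$ not divisible by $x$ polarize exactly to the generators of $I^{\mathrm{pol}}$ not divisible by the distinguished copy of $x$, so that $\reg(I,x)=\reg(L)=\reg(L^{\mathrm{pol}})\le\reg(I^{\mathrm{pol}})=\reg(I)$ --- but this is routine and the reduction does go through.
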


\begin{proof}
The bound follows immediately from the well-known exact sequence
\[
0 \longrightarrow \frac{S}{(I : x)}(-1) \longrightarrow \frac{S}{I} \longrightarrow \frac{S}{(I, x)} \longrightarrow 0,
\]
where the first non-zero quotient is twisted by $-1$, since the associated map is multiplication by $x$.

To prove that equality occurs, we first show that, in general, $\reg(I,x)\leq \reg(I)$. Set 
$\reg(I,x) = m$. Then there is a square-free monomial $\tau$ such that $\beta_{j,\tau}(I,x)\ne 0$,
and $|\tau|-j = m$, where we use the multigraded Betti numbers, in homological degree $j$ and multidegree $\tau$,
(we write $|\tau|$ to mean the number of variables in the support of $\tau$). 
Observe that $(I,x) = (L,x)$, where $L$ are those monomials in $I$ which do not have $x$ in their support.
Moreover, it is easily seen that $\Tor_j^R((I,x),K)_{\tau} = \Tor_{j-1}^R(L,K)_{\tau/x}$ (see \cite[Lemma 1.3.8]{K1})
if $x$ is in the support of $\tau$, and if not, then  $\Tor_j^R((I,x),K)_{\tau} = \Tor_{j}^R(L,K)_{\tau}$. 
In the first case, $\beta_{j-1,\tau/x}(I)\ne 0$, and in the second case, $\beta_{j,\tau}(I)\ne 0$. (See
\cite[Lemma 1.3.8]{K1}, in particular (a).) In either case, the regularity of $I$ is at least $m$, proving our
claim.

Now suppose by way of contradiction that $\reg(I)$ is not equal to either term in the inequality.  From the above paragraph
it follows that $\reg (I \colon x) + 1 > \reg(I) > \reg(I,x)$. Set $m = \reg (I \colon x)$, and suppose this occurs in
homological degree $j$ and mutidegree $\sigma$, i.e., $\beta_{j,\sigma}(I \colon x)\ne 0$, and $|\sigma|-j = m$. Note that
$\sigma$ does not involve $x$ since $I:x$ has no minimal generator divisible by $x$, and also observe that
$\beta_{j,\sigma}(I\colon x) = \beta_{j+1,\sigma}(S/(I \colon x))$. 

Set $\tau = \sigma\cup \{x\}$. From the long exact sequence on Tor coming from our basic
short exact sequence, it follows that either $\beta_{j+1,\tau}(S/I)\ne 0$ or $\beta_{j+2,\tau}(S/(I,x))\ne 0$. In the
first case, the regularity of $I$ is then at least $|\tau|-j = m +1$, contradicting the assumption that $m + 1 > \reg(I)$.
In the second case, the regularity of $(I,x)$ is at least $|\tau|-(j+1) = m$. This gives a  contradiction:
$m + 1 = \reg (I \colon x) + 1 > \reg(I) > \reg(I,x) = m$.
\end{proof}

\bigskip

\section{Gap-free graphs}

In this section we observe some basic results concerning gap-free graphs and their regularity. In particular, we give a simple
proof of a result of Nevo that a claw-free and gap-free graph has regularity at most $3$.
\begin{definition}
For any graph $G$, we write $\reg(G)$ as shorthand for $\reg(I(G))$.  
\end{definition}

Recall that the \emph{star} of a vertex $x$ of $G$, for which we write $\st x$, is given by
\[
\st x = \{y \in V(G) : (x, y) \text{ is an edge of }G\} \cup \{x\}.
\]
The following lemma interprets Lemma \ref{exact} in terms of edge ideals.  Its straightforward verification is left to the reader.  

\begin{lemma}\label{removevertex}
Let $x$ be a vertex of $G$.  Then
\[
(I(G) : x) = I(G - \st x) \text{ and } (I(G), x) = I(G - x).
\]
Thus, $\reg(G) \leq \max \{ \reg(G - \st x ) + 1, \reg(G - x)\}$. Moreover, $\reg(G)$ is equal to one of
these terms.

\end{lemma}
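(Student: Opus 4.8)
The plan is to reduce the statement about edge ideals to the already-proved Lemma~\ref{exact}, which is the purely algebraic version of exactly this claim. The work is entirely in identifying the two colon-type ideals $(I(G):x)$ and $(I(G),x)$ with edge ideals of the two combinatorial operations $G-\st x$ and $G-x$; once that identification is in hand, the inequality and the ``equality is achieved'' statement are immediate translations of Lemma~\ref{exact}.

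First I would compute $(I(G),x)$. By definition $I(G)$ is generated by the monomials $yz$ for edges $(y,z)$ of $G$; adjoining the variable $x$ and minimalizing, every generator $xz$ (for $z\in\st x$) becomes divisible by $x$ and hence redundant, so $(I(G),x)$ is generated by $x$ together with the monomials $yz$ where $(y,z)$ is an edge of $G$ with $x\notin\{y,z\}$. Modulo $x$ these are exactly the edges of the induced subgraph $G-x$ obtained by deleting the vertex $x$ (and the edges meeting it), so $(I(G),x)=I(G-x)$ in the sense intended by the lemma.

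Next I would compute the colon ideal $(I(G):x)$. A monomial $m$ lies in $(I(G):x)$ iff $xm\in I(G)$, i.e.\ $xm$ is divisible by some edge-monomial $yz$. Splitting into cases according to whether that edge uses $x$ or not shows the minimal generators of $(I(G):x)$ are: the variables $y$ for each $y\in\st x\setminus\{x\}$ (since $xy\in I(G)$ forces $y\in(I(G):x)$), together with the edge-monomials $yz$ of $G$ not meeting $\st x$. After accounting for the linear generators $y$ with $y$ a neighbour of $x$, the remaining quadratic generators are precisely the edges among the vertices outside $\st x$, which are exactly the edges of the induced subgraph $G-\st x$. Hence $(I(G):x)=I(G-\st x)$. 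This is the step I expect to require the most care, because one must check that deleting the entire closed star $\st x$ (not merely the vertex $x$) is what makes the linear generators coming from the neighbours of $x$ ``invisible'' to the induced edge structure; a careless computation would delete only $x$ and produce the wrong graph.

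Finally I would substitute these two identifications into Lemma~\ref{exact} applied to $I=I(G)$ and the variable $x$. That lemma gives
\[
\reg(I(G))\leq\max\{\reg(I(G):x)+1,\ \reg(I(G),x)\}=\max\{\reg(G-\st x)+1,\ \reg(G-x)\},
\]
using the shorthand $\reg(H)=\reg(I(H))$, and the final sentence of Lemma~\ref{exact} that $\reg(I(G))$ equals one of the two right-hand terms transfers verbatim to give that $\reg(G)$ equals one of $\reg(G-\st x)+1$ or $\reg(G-x)$. This completes the verification, so the lemma is indeed the promised graph-theoretic reformulation of Lemma~\ref{exact}.
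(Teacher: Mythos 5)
The paper leaves this verification to the reader, and your proposal supplies exactly the intended argument: identify $(I(G),x)$ with $I(G-x)$ and $(I(G):x)$ with $I(G-\st x)$ (correctly noting that the linear generators coming from the neighbours of $x$ force deletion of the whole closed star, and that such variable generators do not affect the regularity), then invoke Lemma~\ref{exact}. This is correct and matches the paper's approach.
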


Here and throughout we let the variables do double-duty, as they also represent vertices in the graph $G$.  

The following observation was pointed out to us by  Sergey Norin  on Mathoverflow.net (\cite{No}).

\begin{proposition}\label{distance2}
Let $G$ be gap-free, and let $x$ be a vertex of $G$ of highest degree.  Then $d(x, y) \leq 2$ for all vertices $y$ of $G$.  
\end{proposition}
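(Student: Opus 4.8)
The plan is to argue by contradiction. Suppose there is a vertex $y$ with $d(x,y) \geq 3$, meaning $y$ is neither adjacent to $x$ nor shares a common neighbor with $x$. I want to produce a gap — that is, two vertex-disjoint edges of $G$ with no edge of $G$ connecting them — to contradict the gap-free hypothesis.

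Let me think about what edges to use. Since $x$ has highest degree, let $N(x) = \st x \setminus \{x\}$ be its neighborhood. The vertex $y$ cannot be in $N(x)$ (else $d(x,y)=1$) and cannot be adjacent to any vertex of $N(x)$ (else $d(x,y)=2$). So $y$ and all of $N(x)$ are non-adjacent, and $y \notin \st x$.

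First I would fix an edge at $x$, say $(x, a)$ where $a \in N(x)$, and an edge at $y$, say $(y, b)$ where $b$ is some neighbor of $y$ (here I use that $y$ has degree at least one, which I should address — if $G$ has no isolated vertices, as assumed when forming $I(G)$, then $y$ has a neighbor $b$). These two edges are vertex-disjoint: $x \ne y$, $x \ne b$ since $x$ is not adjacent to $y$ so $b \ne x$, and $a \ne y, a \ne b$ because $a \in N(x)$ is not adjacent to $y$ and hence $a \ne b$ (as $b$ is a neighbor of $y$) and $a \ne y$. Now the gap-free condition forces an edge between $\{x,a\}$ and $\{y,b\}$. Since neither $x$ nor $a$ is adjacent to $y$, and $x$ is not adjacent to $b$ (because $d(x,y) \leq 2$ would follow from $x$–$b$–$y$), the only possible such edge is $(a,b)$. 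Thus every neighbor $b$ of $y$ must be adjacent to $a$; running this over a fixed choice of $a$ shows $N(y) \subseteq N(a)$, and in fact this holds for every $a \in N(x)$, so $N(y)$ is contained in the common neighborhood of all of $N(x)$.

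The key step — and the main obstacle — is to extract a degree contradiction. The cleanest route is to compare the degrees of $x$ and $b$ (a neighbor of $y$). From the above, $b$ is adjacent to every vertex of $N(x)$, so $N(x) \subseteq N(b)$; moreover $b$ is adjacent to $y$ while $x$ is not (indeed $y \notin N(x)$ but $y \in N(b)$), and $b \notin N(x)$ so $b$ itself is not double-counted in $N(x)$. Hence $N(b) \supseteq N(x) \cup \{y\}$, which gives $\deg(b) \geq \deg(x) + 1$, contradicting the maximality of $\deg(x)$. I should double-check the edge cases: that $y$ genuinely has a neighbor (guaranteed by the no-isolated-vertices convention under which $I(G)$ is defined), and that the vertices $x, a, b, y$ are genuinely distinct so the two chosen edges really do form a candidate gap. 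With those checked, the contradiction is complete and $d(x,y) \leq 2$ for all $y$.
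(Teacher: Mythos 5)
Your proof is correct and follows essentially the same route as the paper's: both arguments show that gap-freeness forces a neighbor of $y$ to be adjacent to every vertex of $N(x)$ as well as to $y$, so that neighbor has degree exceeding $\deg(x)$. The only cosmetic difference is that you work with an arbitrary neighbor $b$ of $y$ under the hypothesis $d(x,y)\geq 3$, whereas the paper reduces to $d(x,y)=3$ and uses the middle vertex $z$ of a shortest path; your handling of the degenerate cases (existence of a neighbor of $y$, distinctness of $x,a,b,y$) is careful and matches the paper's implicit conventions.
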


\begin{proof}
Suppose otherwise.  Then there must be a vertex $y$ with $d(x, y) = 3$.  Let $x$ have degree $k$,
and list the neighbors of $x: w_1, w_2, \ldots, w_k$.  Without loss of generality, assume that $(w_1, z)$ and
$(z, y)$ are edges of $G$ (for some vertex $z$).  For any $i$ with $2 \leq i \leq k$, $(x, w_i)$ and $(z, y)$
do not form a gap in $G$.  Thus, there must be an edge with one endpoint in $\{x, w_i\}$ and one
in $\{z, y\}$.  Because $d(x, z)$ and $d(x, y)$ both exceed $1$, this edge cannot have $x$ as an endpoint. 
Similarly, $(w_i, y)$ cannot be an edge, since then we would have $d(x, y ) \leq 2$.  Thus, $(w_i, z)$
is an edge for each $i$ with $1 \leq i \leq k$ (note that we already established that $(w_1, z)$ was an edge).
Since $(z, y)$ is an edge of $G$ as well, the degree of $z$ exceeds $k$, which is a contradiction.  
\end{proof}

The above proposition together with Lemma \ref{removevertex} allow us to recover a result of Nevo with a simpler proof, see \cite[Theorem 1.2]{N}.
This theorem presumably also follows from the classification of claw-free graphs announced in \cite{CS}, but
this classification is highly non-transparent.

\begin{theorem}
Suppose $G$ is both claw-free and gap-free.  Then $\reg(G) \leq 3$.  
\end{theorem}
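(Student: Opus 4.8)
The plan is to induct on the number of vertices of $G$, using Lemma \ref{removevertex} as the engine. Choose a vertex $x$ of highest degree, say $k$. Lemma \ref{removevertex} gives $\reg(G) \leq \max\{\reg(G - \st x) + 1, \reg(G - x)\}$, and both $G - \st x$ and $G - x$ are induced subgraphs of $G$, hence remain claw-free and gap-free (claw-freeness and gap-freeness are both clearly preserved under passing to induced subgraphs). So by induction $\reg(G - x) \leq 3$, and this term poses no difficulty. The entire burden falls on showing that $\reg(G - \st x) \leq 2$, since then $\reg(G - \st x) + 1 \leq 3$ as well.

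To control $\reg(G - \st x)$, I would combine the two structural facts specific to our hypotheses. By Proposition \ref{distance2}, since $x$ has highest degree and $G$ is gap-free, every vertex of $G$ lies within distance $2$ of $x$; consequently the vertices of $G - \st x$ are precisely those at distance exactly $2$ from $x$. Now I want to understand the graph $H := G - \st x$ induced on these distance-$2$ vertices. The key claim will be that $H$ is itself gap-free \emph{and} that $G^c$ contains no induced $C_i$ for $i \geq 4$ once we restrict attention to $H$ — more precisely, that $H^c$ has no induced cycle $C_i$ with $i \geq 4$. If I can establish this, then the final sentence of Theorem \ref{inducedcycles} gives $\reg(H) = 2$ immediately (or $H$ has no edges, in which case $\reg(H)$ is even smaller and we interpret regularity of the zero/unit ideal appropriately). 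The base cases of the induction, graphs on very few vertices, are trivial.

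The main obstacle is precisely the claim that $H = G - \st x$ has $\reg \leq 2$, i.e. that $H^c$ contains no induced $C_i$ for $i \geq 4$. Here is where claw-freeness must enter in an essential way, since gap-freeness alone only forbids induced $C_4$ in $G^c$. The heart of the argument should be to show that any induced cycle of length $\geq 4$ in $H^c$, combined with the fact that each vertex of $H$ is adjacent (in $G$) to some neighbor $w_i$ of $x$, would force a claw centered at one of the $w_i$: if three vertices of $H$ are pairwise non-adjacent in $G$ (as consecutive non-edges in a $C_4 \subseteq H^c$ produce) and all attach to a common neighbor of $x$, that neighbor together with these three independent vertices forms an induced claw $K_{1,3}$, contradicting claw-freeness. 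The delicate points are (i) pinning down why the distance-$2$ vertices forming a long induced cycle in $H^c$ must share, or be forced into, such a common-neighbor configuration, and (ii) handling the interaction with $x$ and its neighbors $w_1, \ldots, w_k$ carefully enough to rule out \emph{all} induced cycles of length $\geq 4$ in $H^c$, not merely $C_4$. I would work out the adjacency constraints among the $w_i$ and the distance-$2$ vertices explicitly, mirroring the local case-analysis already used in the proof of Proposition \ref{distance2}, and extract the forbidden claw from whichever configuration survives.
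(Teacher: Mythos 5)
Your overall strategy coincides with the paper's: induct on the number of vertices, apply Lemma \ref{removevertex} at a vertex $x$ of maximal degree, dispose of $\reg(G-x)$ by induction, and reduce everything to showing $\reg(G - \st x) \leq 2$ via Theorem \ref{inducedcycles} and Proposition \ref{distance2}. That skeleton is exactly right. However, the one step you flag as "the main obstacle" is precisely the step that constitutes the proof, and the mechanism you propose for it does not work. You suggest extracting a claw from \emph{three vertices of $H = G - \st x$ that are pairwise non-adjacent in $G$ and share a common neighbor}. But if $y_1, \ldots, y_n$ is an induced cycle in $H^c$, then two of the $y_i$ are non-adjacent in $G$ exactly when they are consecutive in that cycle (non-consecutive pairs are non-edges of $H^c$ by inducedness, hence edges of $G$). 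Since no three vertices of a cycle of length $\geq 4$ are pairwise consecutive, the cycle contains no independent set of size $3$ in $G$; in particular, for the $C_4$ case you cite, $y_1, y_2, y_3$ satisfy $(y_1,y_3) \in E(G)$. So the configuration you hope to find simply never occurs, and claw-freeness cannot be invoked this way.

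The missing idea is that the claw must use $x$ itself as one of its three leaves, and that gap-freeness (not just claw-freeness) is needed a second time to produce the claw. Concretely: take $y_1$ on the cycle and let $w$ be a common neighbor of $x$ and $y_1$ (which exists by Proposition \ref{distance2}). Since $n \geq 4$, the vertices $y_2$ and $y_n$ are non-consecutive on the cycle, hence $(y_2, y_n)$ is an edge of $G$. Applying gap-freeness to the pair of edges $(x,w)$ and $(y_2,y_n)$, and noting that $x$ cannot be adjacent to $y_2$ or $y_n$ (they are at distance $2$), one of $(w,y_2)$, $(w,y_n)$ must be an edge, say $(w,y_2)$. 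Now $\{x, y_1, y_2\}$ is independent in $G$ ($x$ is at distance $2$ from $y_1, y_2$; and $y_1, y_2$ are consecutive on the $H^c$-cycle, hence non-adjacent in $G$), and all three are adjacent to $w$ --- an induced claw. Without this use of $x$ as a leaf and the second application of gap-freeness to force the edge $(w,y_2)$, the argument does not close, so as written the proposal has a genuine gap at its central step.
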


\begin{proof}
Let $x$ be a vertex of $G$ of highest possible degree.  By Lemma \ref{removevertex}, we know
$\reg(G) \leq \max\{ \reg(G- \st x) +1, \reg(G - x)\}$.  Note that both $G-\st x$ and $G - x$ are claw-free
and gap-free.  That $\reg(G- x) \leq 3$ is easily shown by induction on the number of vertices of $G$ (the base
case being trivial, since a simple graph with one vertex has no edges).  It remains to be shown
that $\reg(G - \st x ) \leq 2$.  By Theorem \ref{inducedcycles}, it is enough to show that $(G-\st x)^c$ contains
no induced cycle of length $\geq 4$.  Suppose on the contrary that $y_1, y_2, \ldots, y_n$ are the 
vertices of an induced cycle in $(G - \st x)^c$, where $n \geq 4$.  By Proposition \ref{distance2}, 
each $y_i$ is distance $2$ from $x$ (in $G$), so $(x, w)$ and $(w, y_1)$ are edges of $G$ for some 
vertex $w$.  Further note that $(y_2, y_n)$ is an edge of $G$, since $y_2$ and $y_n$ are non-neighbors in 
the induced cycle in $(G-\st x)^c$.  In order for the pair of edges $(x, w)$ and $(y_2, y_n)$ not to
form a gap of $G$, either $(w, y_2)$ or $(w, y_n)$ must be an edge of $G$.  Without loss, 
suppose $(w, y_2)$ is an edge.  Then the induced subgraph on $\{x, w, y_1, y_2\}$ is a claw, 
which is a contradiction.
\end{proof}

\section{Bounding the regularity}

Our main result in this section is the following, which bounds the regularity of edge ideals which are $k$-steps linear and have a given maximal degree. A general bootstrapping  process then gives upper bounds depending just on the  number of variables (see Theorem \ref{mainThm}).
There are very few examples known of edge ideals of even gap-free graphs with large regularity. The paper of Nevo and
Peeva \cite{NP} gives an example of a gap-free graph in 12 variables whose edge ideal has regularity $4$. Later \cite{N}
gave an example of a gap-free graph in 120 variables whose edge ideal has regularity $5$. It is entirely possible
that there is an absolute bound to the regularity of such edge ideals, but we have been unable to prove it. Nonetheless,
our next theorem shows that the regularity is, in the worst case, logarithmic in the maximal degree. 
\begin{theorem}\label{log}
Let $G$ be a graph such that $I(G)$ is $k$-steps linear for some $k \geq 1$, and let $d$ be the maximum degree of a vertex in $G$.  Then 
\[
\reg(I(G)) \leq \log_{\frac{k+4}{2}}\left(\frac{d}{k+1}\right) +3.
\]
\end{theorem}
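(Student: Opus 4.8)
The plan is to induct on the maximum degree $d$, using Lemma \ref{removevertex} to peel off a vertex of highest degree and control how much the degree can drop in each branch. Let $x$ be a vertex of $G$ of maximum degree $d$. By Lemma \ref{removevertex}, we have $\reg(G) \leq \max\{\reg(G - \st x) + 1, \reg(G - x)\}$, and moreover by Corollary \ref{removeinduced} both $G - \st x$ and $G - x$ remain $k$-steps linear, since each is an induced subgraph of $G$. The term $\reg(G-x)$ is unproblematic: deleting a single vertex decreases the maximum degree by at most a controlled amount, and I would handle it by a straightforward induction. The real content lies in bounding $\reg(G - \st x) + 1$, and this is where the $k$-steps linear hypothesis and the logarithmic drop must be exploited.

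The first key step is to show that passing from $G$ to $G - \st x$ forces the maximum degree to drop \emph{multiplicatively} by a factor roughly $\frac{k+4}{2}$, rather than merely additively. Here I expect to use a counting/double-counting argument combined with the $k$-steps linear condition, which by Theorem \ref{inducedcycles} says $G^c$ has no induced $C_i$ for $4 \leq i \leq k+2$. The intuition, generalizing Proposition \ref{distance2} (the case $k=1$), is that the absence of short induced cycles in the complement forces vertices to be ``close'' to the high-degree vertex $x$ and forces many edges among the neighbors of $x$; consequently, any vertex surviving in $G - \st x$ (i.e., a non-neighbor of $x$) must lose a substantial fraction of its neighbors when we delete $\st x$. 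Quantifying this — showing that the new maximum degree $d'$ satisfies $d' \leq \frac{2}{k+4}\cdot d$ or a comparable estimate — is the main obstacle, and I anticipate it requires a careful extremal argument analyzing how the neighborhoods of surviving vertices intersect $\st x$ under the no-short-induced-cycle constraint.

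Granting such a multiplicative degree drop, the induction closes cleanly. Writing $f(d)$ for the claimed bound $\log_{\frac{k+4}{2}}\!\left(\frac{d}{k+1}\right) + 3$, the branch $\reg(G - \st x) + 1$ contributes at most $f(d') + 1$; since $d' \leq \frac{2}{k+4} d$, we get
\[
f(d') + 1 \leq \log_{\frac{k+4}{2}}\!\left(\frac{2d}{(k+4)(k+1)}\right) + 3 + 1 = \log_{\frac{k+4}{2}}\!\left(\frac{d}{k+1}\right) - 1 + 4 = f(d),
\]
where the $-1$ comes precisely from dividing the argument of the logarithm by the base $\frac{k+4}{2}$, and this cancels the $+1$ from the exact sequence. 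The base case is when $d$ is small enough that $\log_{\frac{k+4}{2}}\!\left(\frac{d}{k+1}\right) + 3 \geq$ a trivial bound on regularity (for instance $d \leq k+1$ giving a gap-free or otherwise well-understood situation), which I would verify directly.

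I would structure the final writeup by first isolating the degree-drop estimate as a separate lemma (this is the combinatorial heart), then assembling the induction as above. The only delicate point in the assembly is ensuring the $\reg(G-x)$ branch does not dominate; since $G - x$ has maximum degree at most $d$ and one fewer vertex, a secondary induction on the number of vertices, nested appropriately, should suffice to reduce it to the degree-drop branch or to the base case.
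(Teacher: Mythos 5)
Your outline correctly identifies the two branches of Lemma \ref{removevertex} and correctly senses that the logarithm must come from some multiplicative phenomenon, but the step you flag as ``the main obstacle'' --- that deleting the star of a maximum-degree vertex forces the new maximum degree to satisfy $d' \leq \frac{2}{k+4}\,d$ --- is not only unproved, it is false as a standalone lemma. Take $k=1$ and the gap-free graph on $x,\,y,\,u_0,\dots,u_{d-1},\,z_1,\dots,z_{d-1}$ with edges $(x,u_i)$ for all $i$, $(y,u_0)$, $(y,z_j)$ for all $j$, and $(u_i,z_j)$ for all $i\geq 1$ and all $j$. One checks directly that this graph is gap-free, that every vertex has degree at most $d$ (so $x$ has maximum degree), and yet $G-\st x$ is the star $K_{1,d-1}$, whose maximum degree is $d-1$. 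So a single star deletion need not shrink the maximum degree multiplicatively at all. (In this example $\reg(G-\st x)=2$, so the induction would still close through the base case, but that only shows the degree-drop statement cannot be separated from regularity considerations --- which is precisely the entanglement your proposed lemma fails to capture.)

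The paper's proof is genuinely global rather than one-step-local. It first ``trims'' the graph (Lemma \ref{trim}) so that the star-deletion branch of Lemma \ref{removevertex} can always be taken, then iterates, producing trimmed graphs $G_0, G_1, \dots$ with maximum-degree vertices $x_i$ of degree $d_i$ and pairwise disjoint neighborhoods $N_i$. The multiplicative growth of the $d_i$ is extracted not from comparing consecutive graphs but from a single witness: an induced cycle $C$ of length $r\geq k+4$ in $(G_{\ell+1})^c$ surviving to the end of the process. Gap-freeness forces every vertex of every $N_i$ to be adjacent to at least $r-2$ vertices of $C$, and counting the resulting edges against the average degree of $C$ yields $d_i \geq \frac{k+2}{2}(d_{i+1}+\cdots+d_\ell) + \frac{(k+1)(k+4)}{2}$ for every $i\leq\ell$ simultaneously; only after telescoping this full system does one get $d_i \geq (k+1)\left(\frac{k+4}{2}\right)^{\ell-i+1}$ and hence $\ell \leq \log_{\frac{k+4}{2}}\left(\frac{d}{k+1}\right)-1$. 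Note that even a correct one-step comparison would only give the ratio $\frac{k+2}{2}$, not $\frac{k+4}{2}$; the stated base of the logarithm requires the sum over all later stages. So the gap is not cosmetic: the combinatorial heart of the theorem is missing from your proposal, and the local lemma you propose to put there cannot be proved in the form you state.
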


Before proving Theorem \ref{log}, we introduce a process which allows us to derive bounds on graph statistics through induction on the number of vertices.  Although Corollary \ref{removeinduced} shows removal of \emph{any} induced subgraph from a graph $G$ with $I(G)$ $k$-steps linear yields a graph whose edge ideal is still $k$-steps linear, our approach works in the much more general context of \emph{stable} graph properties, which we define as follows.  

\begin{definition}
Let $P$ be a graph property.  We say that $P$ is \emph{stable} if for any graph $G$ satisfying $P$ and any maximal degree vertex $x$ of $G$, both $G - \st x$ and $G - x$ satisfy $P$.  
\end{definition}

For example, ``claw-free'' and ``gap-free'' are both stable graph properties.  Such properties allow us to bound the regularity of the
associated edge ideals in terms of the number of vertices and/or maximal vertex degree of $G$, via the following lemma.  
In some sense this lemma provides a refined and more powerful version of \cite[Theorem 3.14]{MV}.

\begin{lemma}\label{bootstrap}
Let $P$ be a stable graph property, and let $g, f: \mathbb{R}_{\geq 0} \rightarrow \mathbb{R}_{\geq 0}$ be non-decreasing functions satisfying the following.
\begin{enumerate}
\item[1)] If $G$ is a $P$-graph with no vertices of degree $>d$, $\reg(G) \leq g(d)$.
\item[2)] The function $f$ is concave-down (and thus twice-differentiable), and $f(1) \geq 2$.
\item[3)] For all $x \in \mathbb{R}_{\geq 0}$, we have $g(1/f'(x) - 1) \leq f(x)$.
\end{enumerate}
Then for any $P$-graph $G$ on $n$ vertices, we have $\reg(I(G)) \leq f(n)$.  
\end{lemma}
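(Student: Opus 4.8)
The plan is to induct on the number of vertices $n$, using the fundamental recursion from Lemma~\ref{removevertex} together with the calculus hypotheses on $f$ and $g$. Let $G$ be a $P$-graph on $n$ vertices, and let $x$ be a vertex of maximal degree, say $\deg(x) = d$. By Lemma~\ref{removevertex}, we have $\reg(G) \leq \max\{\reg(G - \st x) + 1, \reg(G - x)\}$, and since $P$ is stable, both $G - \st x$ and $G - x$ are again $P$-graphs. The graph $G - x$ has $n - 1$ vertices, so by induction $\reg(G - x) \leq f(n - 1) \leq f(n)$, using that $f$ is non-decreasing; this handles the second term painlessly. The real content is controlling the first term $\reg(G - \st x) + 1$.

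To bound $\reg(G - \st x)$, I would play the two hypotheses against each other. The graph $G - \st x$ has removed $x$ together with all its $d$ neighbors, so it has $n - d - 1$ vertices. One option is to apply the inductive hypothesis to get $\reg(G - \st x) \leq f(n - d - 1)$, so that the first term is at most $f(n - d - 1) + 1$. The goal is then to show $f(n - d - 1) + 1 \leq f(n)$, i.e.\ that $f$ increases by at least $1$ as its argument grows from $n - d - 1$ to $n$, a gain of $d + 1$. Concavity (hypothesis 2) gives $f(n) - f(n - d - 1) \geq (d+1) f'(n)$ by the mean value theorem or the standard secant-slope inequality, so it would suffice to have $(d+1) f'(n) \geq 1$, equivalently $d \geq 1/f'(n) - 1$. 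The delicate case is therefore when $d$ is \emph{small}, specifically when $d < 1/f'(n) - 1$; here hypothesis 3 enters. When $d$ is small, I instead bound $\reg(G - \st x)$ directly via hypothesis 1: since $G - \st x$ is a $P$-graph all of whose vertex degrees are at most $d$ (because $x$ had maximal degree $d$ in $G$, and removing vertices only decreases degrees), we get $\reg(G - \st x) \leq g(d)$. Because $g$ is non-decreasing and $d \leq 1/f'(n) - 1$ in this regime, $g(d) \leq g(1/f'(n) - 1) \leq f(n)$ by hypothesis 3. But this only gives $\reg(G - \st x) \leq f(n)$, whereas I need $\reg(G - \st x) + 1 \leq f(n)$; so the argument must be arranged more carefully.

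The resolution I anticipate is to treat the two terms uniformly by comparing against $f$ evaluated at shifted arguments and letting concavity convert the additive $+1$ into a derivative condition. A cleaner formulation: set $\phi(d) = 1/f'(n-d) $ or work with the single inequality $f(n) \geq \max\{f(n - d - 1) + 1,\ g(d) + 1\}$ and split on the sign of $d + 1 - (1/f'(\xi) )$ for an appropriate $\xi$. I expect the main obstacle to be pinning down \emph{which} point the derivative is evaluated at so that the concavity estimate $f(n) - f(n-d-1) \geq (d+1)f'(n)$ and the hypothesis $g(1/f'(x) - 1) \leq f(x)$ mesh exactly to cover the crossover at $d = 1/f'(n) - 1$ with the extra $+1$ accounted for. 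The base case $n = 1$ (a single vertex, hence no edges, $\reg = 0$ or $-\infty$ by convention) is trivial since $f(1) \geq 2$ by hypothesis. I would verify the crossover by checking that at $d = 1/f'(n) - 1$ both candidate bounds agree, so that for $d$ above this threshold the concavity branch wins and for $d$ below it the $g$-branch wins, with hypothesis 3 guaranteeing the $g$-branch never exceeds $f(n)$.
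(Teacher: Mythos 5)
Your overall architecture matches the paper's: induct on $n$, use Lemma \ref{removevertex} together with stability, handle the $G-x$ branch by monotonicity of $f$, and split on whether $d$ is above or below the threshold $1/f'(n)-1$, using the Mean Value Theorem plus concavity in the large-$d$ regime. That part of your argument is correct: when $d > 1/f'(n)-1$, concavity gives $f(n)-f(n-d-1) = (d+1)f'(c) \geq (d+1)f'(n) > 1$, so $f(n-d-1)+1 < f(n)$ and the star branch is fine.

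However, your treatment of the small-$d$ regime has a genuine gap that you yourself flag but do not close. You apply hypothesis 1 to $G - \st x$ to get $\reg(G-\st x) \leq g(d) \leq f(n)$, and then correctly observe that this leaves the extra $+1$ unaccounted for; your concluding paragraph about a ``crossover'' and a point $\xi$ where the two bounds mesh is speculation, not an argument, and as stated it does not produce the inequality $\reg(G-\st x)+1 \leq f(n)$. The fix is simpler than what you are reaching for: when $d \leq 1/f'(n)-1$, do not invoke the recursion at all. The graph $G$ itself is a $P$-graph with no vertex of degree exceeding $d$, so hypothesis 1 applies directly to $G$, giving $\reg(I(G)) \leq g(d) \leq g(1/f'(n)-1) \leq f(n)$ with no $+1$ ever appearing. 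This is exactly what the paper does: it disposes of the small-degree case up front, before the induction, and only enters the recursive argument under the standing assumption $d > 1/f'(n)-1$, where the MVT estimate you already have finishes the proof. With that one change your argument becomes the paper's proof; without it, the small-degree case is not established.
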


\begin{proof}
Let $G$ be a $P$-graph on $n$ vertices, and let $d$ be the maximal degree of a vertex in $G$.  If $d \leq 1/f'(n) - 1$, the result follows immediately since 
\[
\reg(I(G)) \leq g(d) \leq g\left( \frac{1}{f'(n)} -1\right) \leq f(n),
\]
where the middle inequality follows from the fact that $g$ is non-decreasing.  Thus, we assume $d > 1/f'(n) -1$ and induct on $n$.  For the base case $n = 1$, $G$ has no edges, so $d = 0$ and $\reg(I(G)) = 2 \leq f(1)$.  

Now let $x$ be a vertex of $G$ of maximal degree.  Because $P$ is stable, $G - \st x$ is again a $P$-graph.  Because $G - \st x$ has $n -d - 1$ vertices and $G - x$ has $n-1$ vertices, induction gives 
\[
\reg(I(G- \st x)) \leq f(n- d -1) \text{ and } \reg(I(G - x)) \leq f(n -1).
\]
By Lemma \ref{exact}, we know that $\reg(I(G)) \leq \max\{ \reg(I(G - \st x)) + 1, \reg(I(G - x))\}$, and so 
\[
\reg(I(G)) \leq \max \{ f(n - d -1) + 1, f(n-1)\}.
\]  
If $\reg(I(G)) \leq f(n-1)$ then $\reg(I(G)) \leq f(n)$, as $f$ is non-decreasing.  So, assume $\reg(I(G)) \leq f(n - d - 1) + 1$.  By the ordinary Mean Value Theorem, there exists $c < n$ with $f(n) - f(n-d-1) = (d+1)f'(c)$.  Because $d > 1/f'(n) -1$ by assumption and $f$ is concave-down, $(d+1)f'(c) \geq (d+1)f'(n) >1$, meaning $f(n) > f(n-d-1) + 1 > \reg(I(G))$. 
\end{proof}

\begin{algorithm}\label{f}
{\rm Given a function $g(x)$ as in Lemma \ref{bootstrap} (and further requiring that $g(x)$ is strictly increasing), we construct the associated function $f(x)$.  Let $h(x) = g(x -1)$, so that $h(1/f'(x)) = f(x)$, so that $f'(x)h^{-1}(f(x)) = 1$.  If we set $H(x)$ to be an antiderivative of $h^{-1}(x)$, then $f'(x)h^{-1}(f(x))$ is the derivative of $H(f(x))$, meaning $f(x) = H^{-1}$.  The additive constant of $H(x)$ can then be determined by condition $2$ of Lemma \ref{bootstrap}.}
\end{algorithm}

The following lemmas will prove helpful.  The first is a special case of Corollary \ref{removeinduced}, since a vertex and its star are both induced subgraphs.

\begin{lemma}\label{deleting}
Let $G$ be a graph such that $I(G)$ is $k$-steps linear.  Then $I(G - x)$ and $I(G - \st x)$ are both $k$-steps linear for any vertex $x$.  
\end{lemma}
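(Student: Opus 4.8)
The plan is to establish Lemma~\ref{deleting} directly as a consequence of the two results already proved, namely Corollary~\ref{removeinduced} and Theorem~\ref{inducedcycles}. The key structural observation, which the text flags explicitly in the sentence preceding the statement, is that both a single vertex $x$ and its star $\st x$ determine \emph{induced} subgraphs of $G$: a single vertex is trivially induced, and the star $\st x$ is induced because we take it together with all edges of $G$ among its vertices. Once that is in hand, the result is immediate from Corollary~\ref{removeinduced}, which asserts that $k$-step linearity is preserved under deletion of any induced subgraph.

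First I would verify the two instances of inducedness. For $G-x$, I delete the single vertex $x$; the subgraph induced on $\{x\}$ is just that vertex, so $G-x = G - G'$ with $G'$ induced, and Corollary~\ref{removeinduced} applies. For $G - \st x$, I observe that $\st x$ consists of $x$ together with its neighbors, and the induced subgraph of $G$ on the vertex set $\st x$ (equipped with all $G$-edges among those vertices) is by definition induced. Deleting this induced subgraph yields $G - \st x$, so Corollary~\ref{removeinduced} again gives that $I(G - \st x)$ is $k$-steps linear. The write-up can be compressed to a single sentence citing Corollary~\ref{removeinduced} and the remark that vertices and stars are induced subgraphs, exactly as the paper's lead-in suggests.

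I do not anticipate a genuine obstacle here, since this lemma is explicitly described as ``a special case of Corollary~\ref{removeinduced}.'' The only point requiring a moment's care is the phrase ``$G - \st x$'': I would confirm that this notation means deleting the vertex set $\st x$ (both $x$ and all its neighbors) from $G$ and passing to the induced subgraph on the complementary vertices, consistent with its usage in Lemma~\ref{removevertex}, where $(I(G):x) = I(G - \st x)$. With that reading fixed, the proof is a one-line invocation. If one preferred a self-contained argument bypassing Corollary~\ref{removeinduced}, one could instead argue directly via Theorem~\ref{inducedcycles}: any induced cycle $C_i$ in $(G - x)^c$ or $(G - \st x)^c$ lifts to an induced cycle in $G^c$ on the same vertices, so the absence of induced $C_i$ (for $4 \le i \le k+2$) in $G^c$ forces their absence in the complements of the subgraphs, preserving $(k-1)$-step linearity. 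But the shortest and cleanest route is simply to cite Corollary~\ref{removeinduced} together with the observation that stars and single vertices are induced.
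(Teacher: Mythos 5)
Your proposal is correct and is essentially identical to the paper's own justification, which simply notes that the lemma is a special case of Corollary~\ref{removeinduced} because a single vertex and a star are both induced subgraphs. The alternative route you sketch via Theorem~\ref{inducedcycles} is also fine (modulo the cosmetic point that $k$-steps linear corresponds to excluding induced $C_i$ for $4 \le i \le k+3$ rather than $k+2$), but it is not needed.
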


\begin{lemma}\label{trim}
Let $G$ be a graph.  Then there exists a (possibly empty) sequence of vertices $w_1, w_2, \ldots, w_m$ such that the graph $G' = G - w_1 - w_2 - \ldots - w_m$ satisfies the following conditions. 
\begin{enumerate}
\item[1)] $\reg(G) \leq \reg(G')$.  
\item[2)] For any vertex $x$ of $G'$, $\reg(G') \leq \reg(G' - \st x) + 1$. 
\end{enumerate}
\end{lemma}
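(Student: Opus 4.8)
\textbf{Proof proposal for Lemma \ref{trim}.}

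The plan is to construct the sequence $w_1, w_2, \ldots, w_m$ greedily: repeatedly look for a vertex that violates condition 2), and delete it. To make this precise, I would start with $G' = G$ and iterate the following step. If every vertex $x$ of the current graph $G'$ already satisfies $\reg(G') \leq \reg(G' - \st x) + 1$, then condition 2) holds and I stop, outputting the current $G'$. Otherwise there exists a vertex $x$ with $\reg(G' - \st x) + 1 < \reg(G')$; by Lemma \ref{removevertex} (the edge-ideal form of Lemma \ref{exact}), since $\reg(G')$ equals one of $\reg(G' - \st x) + 1$ or $\reg(G' - x)$, the failure of the first option forces $\reg(G') = \reg(G' - x)$, and in particular $\reg(G') \leq \reg(G' - x)$. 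I then set this $x$ to be the next vertex $w_i$ in the sequence and replace $G'$ by $G' - x$, recording that regularity did not decrease under this deletion.

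The key observations are as follows. First, each deletion step removes exactly one vertex, so the process must terminate after at most $|V(G)|$ steps (the graph strictly shrinks, and a graph with a single vertex or no edges trivially satisfies condition 2) since $\reg$ is $2$ and any star-deletion leaves regularity at most that). Second, condition 1) is maintained throughout: at every step we pass from $G'$ to $G' - w_i$ with $\reg(G') \leq \reg(G' - w_i)$, so chaining these inequalities along the whole sequence yields $\reg(G) \leq \reg(G')$ for the final $G'$. Third, the termination criterion is exactly condition 2), so the output graph satisfies both requirements.

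I would expect the main subtlety — rather than a deep obstacle — to be the clean use of the ``moreover'' clause of Lemma \ref{removevertex}: the fact that $\reg(G')$ is \emph{equal} to one of the two terms is what converts ``condition 2) fails at $x$'' into the usable inequality $\reg(G') \leq \reg(G' - x)$, which is precisely what drives condition 1). Without the equality statement one could only get an upper bound and the argument would break. I should also be slightly careful at the boundary: the sequence may be empty (if $G$ itself already satisfies condition 2), which is why the statement allows this), and I must ensure the deleted vertex $w_i$ always exists as a genuine vertex of the current graph, which is automatic since it is produced as a witness to the failure of condition 2). Everything else is routine bookkeeping on a terminating induction.
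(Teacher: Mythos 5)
Your proposal is correct and matches the paper's own proof: both arguments greedily delete a vertex violating condition 2), using the ``moreover'' (equality) clause of Lemma \ref{removevertex} to conclude $\reg(G') = \reg(G' - w_i)$ at each step, and terminate because the vertex set strictly shrinks. Your write-up just makes the termination and chaining of inequalities more explicit than the paper does.
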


\begin{proof}
This is a direct consequence of Lemma \ref{removevertex}.  If $\reg(G) \leq \reg(G - \st x) + 1$ for every vertex $x$ of $G$, we may set $G' = G$.  Otherwise, there is some vertex $w_1$ with $\reg(G) > \reg(G - \st  w_1) + 1$ and $\reg(G) \leq \reg(G - w_1)$.  Now we perform the same process with $G - w_1$, and so on.  Continuing in this fashion, we must eventually reach a suitable $G'$.    
\end{proof}

\begin{definition}
We call any graph $G'$ obtained from $G$ in the above fashion a \emph{trimming} of $G$.  
\end{definition}

We are now in a position to give a proof of Theorem \ref{log}.

\begin{proof}[Proof of Theorem \ref{log}]
We define sets of graphs $G_i$ and vertices $x_i$.  First, let $G_0$ be a trimming of $G$, and let $x_0$ be a vertex of $G_0$
of maximal degree.  Now for $i > 0$, let $G_i$ be a trimming of $G_{i-1} - \st x_{i-1}$ and $x_i$ be a vertex of maximal degree in $G_i$.  Let $N_i$ be the set of neighbors of $x_i$ in $G_i$, and let $d_i = |N_i|$.  By Lemma \ref{trim} we have, for any $t \geq 0$, 
\begin{align}\label{ineq}
\reg(G) \leq \reg(G_0) \leq \reg(G_1) + 1 \leq \reg(G_2) + 2 \leq \cdots \leq \reg(G_t) +t.
\end{align}

Since $I(G)$ is $k$-steps linear, Theorem \ref{inducedcycles} gives us that $G^c$ contains no induced cycles of length $m$ for $4 \leq m \leq k+3$.  By Lemma \ref{deleting}, no $(G_i)^c$ can contain induced cycles of these lengths.  Note also that this implies that $G$ is gap-free. 

Now let $\ell$ be the greatest integer such that $(G_{\ell+1})^c$ contains an induced cycle of length $\geq k+4$.  Then $(G_{\ell  +2})^c$ contains no induced cycles of length $>3$, so Theorem \ref{inducedcycles} gives that $\reg(G_{\ell +2}) = 2$.  Let $C = \{w_1, w_2, \ldots, w_r\}$ be the vertices of some induced cycle in $(G_{\ell+1})^c$, where $r \geq k +4$ and each $(w_i , w_{i+1(\text{mod } r)})$ is an edge of $G^c$.  

For $i \leq \ell$, it must be the case that no $w_j$ is a neighbor of $x_i$ (otherwise $w_j$ would not be a vertex in $G_\ell$). 
Let $x \in N_i$.  We claim that at least $r-2$ of the vertices in $C$ are neighbors of $x$.  To see this, suppose otherwise.  Then there must be two vertices $w_j$ and $w_{j'}$ of $C$ that are not adjacent in $C$ (and so adjacent in $G$) but not adjacent to $x$.  But then $(x_i, x)$ and $(w_j, w_{j'})$ would form a gap in $G$.  

Thus, for each $i \leq \ell$, there are at least $d_i(r-2)$ edges from $N_i$ to $C$.  Because the sets $N_i$ are pairwise disjoint (by definition), there are at least $(d_0 + d_1 + \cdots + d_{\ell })(r-2)$ edges incident to vertices in $C$.  The average degree (in $G$) of a vertex in $C$ is then at least
\[
\frac{(d_0 + d_1 + \ldots + d_{\ell})(r-2)}{r}  + (r-3)\geq \frac{(d_0 + d_1 + \cdots + d_{\ell })(k+2)}{k+4} + (k+1), 
\]
where the additional additive term counts adjacencies in $C$.  Because $x_0$ is a vertex of $G_0$ of maximal degree, we have 
\[
d_0 \geq \frac{(d_0 + d_1 + \cdots + d_\ell)(k +2)}{k+4} + (k + 1) \Rightarrow d_0 \geq \frac{k+2}{2} (d_1 + \cdots + d_{\ell }) + \frac{(k+1)(k+4)}{2}
\]
Now we can apply the above argument to each of the graphs $G_i$, obtaining 
\[
d_i \geq \frac{k+2}{2}(d_{i+1} + d_{i+2} + \cdots + d_{\ell }) + \frac{(k+1)(k+4)}{2}
\]
for all $i \leq \ell $. This includes the case $i=l$, where we obtain $d_l \geq  \frac{(k+1)(k+4)}{2}$.  Letting $\alpha = \frac{k+4}{2}$, the above inequality can then be written as 
\[
d_i \geq (\alpha-1)(d_{i+1} + d_{i+2} + \cdots + d_\ell) + (k+1) \alpha.
\]
A straightforward induction argument (given that $d_\ell \geq (k + 1)\alpha$) yields $d_i \geq (k +1)\alpha^{\ell - i +1}$, and so
\[
d_0 \geq (k + 1)\alpha^{\ell+1}.  
\]
Because $d \geq d_0$ (by definition), 
\[
d \geq (k + 1) \left( \frac{k+4}{2}\right)^{\ell + 1} \Rightarrow \log_{\frac{k+4}{2}}\left( \frac{d}{k + 1}\right) - 1 \geq \ell.
\]
Since $\reg(G_{\ell +2}) = 2$ and each $G_i$ is a trimming, (\ref{ineq}) gives us
\[
\reg(G) \leq  \reg(G_{\ell + 2}) + (\ell + 2) \leq \log_{\frac{k+4}{2}}\left(\frac{d}{k+1}\right) +3.
\]
\end{proof}

We can now apply\footnote{We could also apply Algorithm \ref{f} with $g(x) = \log_{\frac{k+4}{2}}\left(\frac{x}{k + 1}\right) +3$ though the bound obtained is a good deal more complicated.} Algorithm \ref{f} with $g(x) = \log_{\frac{k+4}{2}}\left(\frac{x+1}{k + 1}\right) +3$, obtaining
\[
f(x) = \log_{\frac{k+4}{2}}\left( \frac{x \ln \left( \frac{k+4}{2}\right)}{k+1} + C \right) +3.
\] 
where $C$ is a yet-to-be-determined constant, due to the antiderivative taken in Algorithm \ref{f}.  Solving the inequality $f(1) \geq 2$ required by Lemma \ref{bootstrap}, we can set $C = \frac{2}{k+4} - \frac{\ln((k+4)/2)}{k + 1}$, giving us the following corollary.  

\begin{theorem}\label{mainThm}
Let $G$ be a graph on $n$ vertices such that $I(G)$ is $k$-steps linear.  Then 
\[
\reg(I(G)) \leq \log_{\frac{k+4}{2}}\left( \frac{(n - 1) \ln \left( \frac{k+4}{2}\right)}{k+1} + \frac{2}{k+4} \right) +3.
\]
\end{theorem}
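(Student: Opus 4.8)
The plan is to derive Theorem \ref{mainThm} as a direct application of the bootstrapping machinery in Lemma \ref{bootstrap}, using Theorem \ref{log} to supply the degree-based bound $g$ and Algorithm \ref{f} to manufacture the vertex-based bound $f$. First I would verify that ``$I(G)$ is $k$-steps linear'' is a stable graph property in the sense of the definition preceding Lemma \ref{bootstrap}; this is exactly the content of Lemma \ref{deleting}, which guarantees that both $G - \st x$ and $G - x$ remain $k$-steps linear. With stability in hand, the hypotheses of Lemma \ref{bootstrap} become a checklist, and the whole proof reduces to producing functions $g$ and $f$ meeting conditions $(1)$, $(2)$, and $(3)$.

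For condition $(1)$, Theorem \ref{log} almost immediately gives a valid $g$: a $k$-steps linear $P$-graph with no vertex of degree exceeding $d$ has regularity at most $\log_{(k+4)/2}(d/(k+1)) + 3$. To feed this into Algorithm \ref{f}, which requires a strictly increasing $g$ and works with $h(x) = g(x-1)$, I would take $g(x) = \log_{(k+4)/2}\!\left(\frac{x+1}{k+1}\right) + 3$ (the harmless $+1$ shift, flagged in the paper's footnote, keeps the bootstrap clean). Next I would run the algorithm mechanically: set $h(x) = g(x-1)$, invert it, take an antiderivative $H$ of $h^{-1}$, and set $f = H^{-1}$. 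Since $h$ is a logarithm, $h^{-1}$ is an exponential, its antiderivative is again an exponential, and inverting produces a logarithm of an affine function of $x$. This yields
\[
f(x) = \log_{\frac{k+4}{2}}\!\left( \frac{x \ln\!\left( \frac{k+4}{2}\right)}{k+1} + C \right) + 3,
\]
with $C$ an undetermined constant of integration, matching the displayed formula.

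It then remains to fix $C$ and to confirm the remaining hypotheses of Lemma \ref{bootstrap}. Condition $(3)$, namely $g(1/f'(x) - 1) \leq f(x)$, should hold essentially by construction, since Algorithm \ref{f} is engineered so that $h(1/f'(x)) = f(x)$, i.e. $g(1/f'(x) - 1) = f(x)$ with equality; one only needs to check that the domains match and that $f'(x) > 0$ so the expression makes sense. Condition $(2)$ requires $f$ to be concave-down and to satisfy $f(1) \geq 2$. Concavity follows because $f$ is a logarithm of an increasing affine function, whose second derivative is negative throughout the relevant range. The constraint $f(1) \geq 2$ is where the constant $C$ is pinned down: solving $f(1) \geq 2$ and choosing the extremal value gives $C = \frac{2}{k+4} - \frac{\ln((k+4)/2)}{k+1}$. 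Substituting this back and simplifying $\frac{\ln((k+4)/2)}{k+1} - \frac{\ln((k+4)/2)}{k+1} + \frac{2}{k+4}$ collapses the expression to the stated bound. Applying Lemma \ref{bootstrap} with $n$ in place of the argument, and noting the bootstrap inductions run on $n$ vertices, finally gives $\reg(I(G)) \leq f(n)$, which is the claimed inequality.

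The main obstacle I anticipate is not conceptual but bookkeeping: keeping the various affine shifts consistent across $g$, $h$, $h^{-1}$, and $f$ so that the equality in condition $(3)$ really holds rather than merely approximately, and verifying that the concavity and the inequality $f(1) \geq 2$ hold simultaneously for all $k \geq 1$ with the single chosen constant $C$. In particular I would double-check that the domain on which $f$ is defined (where the logarithm's argument is positive) includes $x = 1$ and all $n \geq 1$ of interest, since a negative or vanishing argument would invalidate both the concavity computation and the evaluation at $x=1$.
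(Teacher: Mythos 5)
Your proposal follows exactly the paper's own route: verify stability via Lemma \ref{deleting}, feed the degree bound of Theorem \ref{log} (shifted to $g(x) = \log_{(k+4)/2}((x+1)/(k+1)) + 3$) into Algorithm \ref{f}, and pin down the integration constant $C$ from the requirement $f(1) \geq 2$ in Lemma \ref{bootstrap}. This is correct and matches the paper's argument essentially step for step.
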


\begin{rem}
{\rm For a graph $G$ such that $G^c$ has no induced $C_i$ for some $i\geq 4$  the number of vertices of $G$ can not exceed $(\frac{d+2}{2})^2+1$, as demonstrated by Sergey Norin on Mathoverflow (\cite{No}).  It is tempting to substitute that bound into Theorem \ref{mainThm} to get a new version of \ref{log}, from which we would derive a new version of \ref{mainThm}, then repeat. However, such a process would not yield better bounds here. }
\end{rem}

\begin{corollary}\label{mainCor}
Let $I \subset S$ be a square-free monomial ideal of height $2$ in a polynomial ring over a field. If $S/I$ satisfies condition  $S_k$ for some $k\geq 2$ then
$$ \cd(S,I) = \pd(S/I) \leq \log_{\frac{k+3}{2}}\left( \frac{(n - 1) \ln \left( \frac{k+3}{2}\right)}{k} + \frac{2}{k+3} \right) +3.$$

\end{corollary}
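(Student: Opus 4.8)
\textbf{Proof proposal for Corollary \ref{mainCor}.}

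The plan is to translate the statement entirely into the language of edge ideals so that Theorem \ref{mainThm} can be applied directly, and then to reconcile the shift in the index $k$. First I would observe that a height-$2$ squarefree monomial ideal $I$ whose quotient $S/I$ is unmixed corresponds, via Alexander duality, to the edge ideal of a graph: the minimal primes of $I$ all have height $2$, so each is generated by two variables, and $I^\vee$ is therefore generated in degree $2$, i.e. $I^\vee = I(G)$ for the graph $G$ whose edges record the minimal primes of $I$. (I should note that $S_k$ for $k \geq 2$ forces unmixedness, since $S_1$ already gives no embedded primes and $S_2$ controls the codimension-two behaviour, so every minimal prime has height exactly $2$.) Here the number of variables $n$ is the number of vertices of $G$.

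Next I would invoke Theorem \ref{terai} to convert the homological quantity into a regularity: $\pd(S/I) = \reg(I^\vee) = \reg(I(G))$. The equality $\cd(S,I) = \pd(S/I)$ is recorded in the Introduction as a standing fact for squarefree monomial ideals, so I would cite it rather than reprove it. The crucial hypothesis is that $S/I$ satisfies $S_k$; by Theorem \ref{inducedcycles}, the condition that $S/I(G)^\vee$ satisfies $S_k$ is equivalent to $I(G)$ being $(k-1)$-steps linear. Here I would be careful: the $I$ of the corollary is the dual of the edge ideal, so ``$S/I$ is $S_k$'' becomes ``$S/I(G)^\vee$ is $S_k$'', which is precisely the equivalence in part (1)$\Leftrightarrow$(2) of Theorem \ref{inducedcycles}. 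Thus $I(G)$ is $(k-1)$-steps linear.

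Finally I would substitute into Theorem \ref{mainThm} with the linearity parameter equal to $k-1$ rather than $k$. Replacing $k$ by $k-1$ in the bound of Theorem \ref{mainThm} turns $\frac{k+4}{2}$ into $\frac{k+3}{2}$, turns $k+1$ into $k$, and turns $\frac{2}{k+4}$ into $\frac{2}{k+3}$, which reproduces exactly the displayed inequality of the corollary. The main obstacle, and the step deserving the most care, is bookkeeping the index shift together with the duality direction: one must confirm that the $S_k$ hypothesis on $S/I$ yields $(k-1)$-steps linearity of the \emph{edge ideal} $I(G) = I^\vee$ (not of $I$ itself), and that Theorem \ref{mainThm}'s ``$k$-steps linear'' input should therefore be read with parameter $k-1$. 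Once the dictionary $I \leftrightarrow I(G)^\vee$ and the equivalence of Theorem \ref{inducedcycles} are pinned down, the estimate is a direct substitution and requires no further combinatorial work.
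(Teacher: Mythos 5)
Your proposal is correct and follows exactly the route the paper intends for this corollary: translate via Alexander duality ($I^\vee = I(G)$, using that $S_2$ forces unmixedness so the dual is generated in degree two), apply Terai's Theorem \ref{terai} and the equivalence in Theorem \ref{inducedcycles} to see that $I(G)$ is $(k-1)$-steps linear, and substitute $k-1$ into Theorem \ref{mainThm}. The index bookkeeping checks out, so nothing further is needed.
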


\bigskip

\section{Bounding the projective dimension}
\medskip

In this Section we give improved bounds on projective dimension of edge ideals. We utilize the same exact sequence from Lemma \ref{exact}.  We begin by observing the following.

\begin{lemma}\label{lyu} Let $I$ be a square-free
monomial ideal, and let $\Lambda$ be any subset of the variables. We relabel the variables so that
$\Lambda = \{x_1,...,x_i\}$. Then either 
there exists  a $j$ with $1\leq j\leq i$ such that
$\pd(S/I)  = \pd(S/((I,x_1,...,x_{j-1}) \colon x_j))$ or $\pd(S/I) = \pd(S/(I,x_1,...,x_i))$.  (Wherever applicable, we set $x_0 = 0$).
\end{lemma}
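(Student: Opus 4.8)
The plan is to prove this by iterating the basic short exact sequence from Lemma \ref{exact}, but applied to projective dimension rather than regularity. The key elementary fact I would first establish is the analogue of Lemma \ref{exact} for projective dimension: given a monomial ideal $J$ and a variable $x$ appearing in it, the short exact sequence
\[
0 \longrightarrow \frac{S}{(J : x)}(-1) \longrightarrow \frac{S}{J} \longrightarrow \frac{S}{(J, x)} \longrightarrow 0
\]
forces, via the long exact sequence on $\Tor^S(-,K)$, the inequality $\pd(S/J) \leq \max\{\pd(S/(J:x)) + 1, \pd(S/(J,x))\}$ together with the statement that $\pd(S/J)$ actually \emph{equals} one of the two outer terms. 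The equality refinement is what makes the lemma usable; I expect it to come from the same kind of degree-bookkeeping argument used in Lemma \ref{exact}, but it is slightly cleaner here because $(J:x)$ and $(J,x)$ involve disjoint sets of variables relative to $x$, so their Betti numbers sit in multidegrees that do not collide. Concretely, I would track a top nonvanishing $\Tor$ of $S/J$ in homological degree $p = \pd(S/J)$ and chase it through the long exact sequence to show it is inherited by one of the two quotients.

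With that single-step statement in hand, the proof of the lemma is a finite induction on the size $i$ of the chosen subset $\Lambda = \{x_1, \ldots, x_i\}$. First I would apply the single-step equality with $J = I$ and $x = x_1$: either $\pd(S/I) = \pd(S/(I:x_1))$, which gives the $j=1$ case of the first alternative (recall $x_0 = 0$, so $(I, x_0) = I$), or $\pd(S/I) = \pd(S/(I, x_1))$. In the latter case I replace $I$ by $(I, x_1)$ and repeat with the variable $x_2$. At each stage $j$ the quotient under consideration is $(I, x_1, \ldots, x_{j-1})$, and applying the single-step statement with $x = x_j$ yields either $\pd(S/(I,x_1,\ldots,x_{j-1})) = \pd(S/((I,x_1,\ldots,x_{j-1}) : x_j))$ — delivering the first alternative of the lemma — or $\pd(S/(I,x_1,\ldots,x_{j-1})) = \pd(S/(I,x_1,\ldots,x_j))$, which lets the induction continue. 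Since the original equality $\pd(S/I) = \pd(S/(I,x_1,\ldots,x_{j-1}))$ propagates along the chain of ``latter case'' equalities, whatever value we land on at the terminating step equals $\pd(S/I)$ itself.

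The induction must terminate after at most $i$ steps. Either some step $j \leq i$ exits through the colon branch, giving $\pd(S/I) = \pd(S/((I,x_1,\ldots,x_{j-1}):x_j))$, or every step takes the quotient branch, in which case after exhausting all of $\Lambda$ we arrive at $\pd(S/I) = \pd(S/(I,x_1,\ldots,x_i))$, which is precisely the second alternative of the lemma. This exactly matches the stated dichotomy.

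I do not anticipate a serious conceptual obstacle; the heart of the argument is the single-step equality refinement for projective dimension, and establishing the \emph{equality} (rather than just the inequality) is the one place requiring genuine care. The subtlety is ruling out the possibility that the top $\Tor$ of $S/J$ gets cancelled in the long exact sequence and fails to survive in either adjacent quotient; I would handle this exactly as in Lemma \ref{exact}, by passing to multigraded Betti numbers and observing that the contributions of $(J:x)$ and $(J,x)$ live in distinct multidegrees (those divisible by $x$ versus those not), so no cancellation can annihilate the maximal homological degree. A minor bookkeeping point to state explicitly is the convention $x_0 = 0$, ensuring the $j=1$ colon term reads as $(I : x_1)$ and the formula is uniform across all $j$.
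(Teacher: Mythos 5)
Your overall strategy---reduce to a one-variable dichotomy and then iterate over $x_1,\ldots,x_i$, exiting through the colon branch at some step $j$ or else exhausting $\Lambda$---is exactly the paper's, and the iteration itself is fine. The gap is in the single-step claim and its proposed justification. First, the dichotomy as you state it, that $\pd(S/J)$ equals either $\pd(S/(J:x))+1$ or $\pd(S/(J,x))$, is false: for $J=(xy,xz)$ one has $\pd(S/J)=2$, while $\pd(S/(J:x))+1=\pd(S/(y,z))+1=3$ and $\pd(S/(J,x))=\pd(S/(x))=1$. The twist $(-1)$ shifts internal degree only, so no ``$+1$'' enters for projective dimension; the correct claim---which is also the one you silently revert to when iterating, and the one the lemma requires---is that $\pd(S/J)=\pd(S/(J:x))$ or $\pd(S/J)=\pd(S/(J,x))$. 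Second, and more seriously, your proposed proof of the equality, namely that the contributions of $(J:x)$ and $(J,x)$ ``live in distinct multidegrees\dots so no cancellation can annihilate the maximal homological degree,'' does not work. In the short exact sequence the term $S/(J:x)$ is twisted by the multidegree of $x$, so its Tor modules contribute in multidegrees divisible by $x$; but $(J,x)$ contains $x$ as a generator, so the Betti numbers of $S/(J,x)$ also occur in multidegrees divisible by $x$ (already $\Tor_1^S(S/(J,x),K)_x\neq 0$). The two sets of multidegrees therefore do collide, the connecting maps can a priori cancel the top Tor of an outer term, and in particular the case $\pd(S/(J,x))=\pd(S/(J:x))+1$ is not excluded by multidegree bookkeeping alone.

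What closes the gap in the paper is the external inequality $\pd(S/(I:m))\leq \pd(S/I)$ for a monomial ideal $I$ and a monomial $m$, cited from \cite[Lemma 1.1]{L} (see also \cite[Lemma 1.3.8]{K1}). With it the single step is immediate: either $\pd(S/(I:x))=\pd(S/I)=:m$ and you are done, or $\pd(S/(I:x))<m$, in which case the long exact sequence on $\Tor$ forces $\Tor_m^S(S/(I,x),K)\neq 0$ and $\Tor_l^S(S/(I,x),K)=0$ for all $l>m$, i.e.\ $\pd(S/(I,x))=m$. You need to either invoke this inequality or prove it (e.g.\ via the Taylor complex); it is a genuine extra input and not a consequence of the long exact sequence together with multigraded degree-tracking.
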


\begin{proof} Let $x = x_i$ for some $i$. We claim that
either $\pd(S/I) = \pd(S/(I,x))$ or $\pd(S/I) = \pd(S/(I:x))$. The lemma then follows easily
from this claim. To prove the claim, we suppose that $\pd(S/I)\ne \pd(S/(I  \colon x))$. 
From \cite[Lemma 1.1]{L} (see also \cite[Lemma 1.3.8]{K1}), it then follows that $\pd(S/I) > \pd(S/(I:x))$. Set $m = \pd(S/I)$. 
The long exact sequence on Tor induced by the exact sequence from Lemma \ref{exact},
together with the inequality $\pd(S/I) > \pd(S/(I:x))$, gives us $\Tor_m^S(S/(I,x),K)\ne 0$, and
$\Tor_l^S(S/(I,x),K) =  0$ for all $l > m$.  Thus, $\pd(S/(I,x)) = m$.
\end{proof}

We apply this to a graph by choosing the set of variables to be the set of neighbors of a
carefully chosen vertex. Our results improve upon \cite[Corollary 5.2]{DE}.  As before, let $G$ be a graph on $\{x_1, x_2, \ldots, x_n\}$.  

\begin{definition}
Let $(x, y)$ be an edge of $G$, and let $A = \{z : (x, z)$ and $(y, z)$ are edges of $G\}$ be the set of common neighbors of $x$ and $y$.  We define the \emph{degree} of this edge to be
\[
\deg(x) + \deg(y) - |A|.
\]
\end{definition}

Put another way, the degree of $(x, y)$ is the number of vertices adjacent to either $x$ or $y$.

\begin{theorem}\label{boundpd} Let $G$ be a graph with $n$ vertices, and let $D$ be the maximum
degree of an edge of $G$. Then $\pd(S/I(G))\leq n(1-\frac {1}{D})$.
\end{theorem}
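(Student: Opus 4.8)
The plan is to bound the projective dimension by induction on $n$ using the recursive machinery of Lemma~\ref{lyu}, choosing the subset $\Lambda$ of variables to be the open neighborhood of a cleverly selected vertex. Concretely, I would pick an edge $(x,y)$ of maximal edge-degree $D$, and let $\Lambda$ be the set of all vertices adjacent to either $x$ or $y$ (so $|\Lambda| = D$, since this count is exactly $\deg(x)+\deg(y)-|A|$). Applying Lemma~\ref{lyu} to this $\Lambda$ gives a dichotomy: either $\pd(S/I(G)) = \pd(S/(I(G),x_1,\ldots,x_D))$, or there is some $j$ with $\pd(S/I(G)) = \pd\bigl(S/((I(G),x_1,\ldots,x_{j-1})\colon x_j)\bigr)$. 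The strategy is to show each alternative yields a graph (or monomial ideal) on strictly fewer variables to which the induction hypothesis applies, and then track how many variables are consumed in each case.

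The first case is the cleaner one: adding all $D$ neighbors to the ideal corresponds to passing to the induced subgraph $G' = G - \Lambda$ on $n - D$ vertices. But here one must be careful, because once $x$ and $y$ have lost all their neighbors they become isolated, and isolated vertices contribute extra variables that do not appear in the ideal. The key observation is that after deleting $\Lambda$, both $x$ and $y$ survive as isolated vertices, so $(I(G),x_1,\ldots,x_D)$ lives naturally in a polynomial ring where these isolated variables are free. I would use the fact that the maximum edge degree of $G'$ is at most $D$ (edge degrees cannot increase under induced-subgraph deletion) so that the inductive bound $\pd \le n'(1 - 1/D)$ applies with $n' $ counting only the non-isolated variables, and then check that reintroducing the isolated variables keeps the bound within $n(1-1/D)$. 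In the second case, the colon ideal $(I(G),x_1,\ldots,x_{j-1})\colon x_j$ is again a squarefree monomial ideal, and one similarly identifies it with the edge ideal of a graph on fewer variables, obtained by deleting $x_j$ together with its surviving neighbors; again the maximal edge degree does not exceed $D$.

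The heart of the estimate is the arithmetic of combining the inductive bounds with the number of variables eliminated at each step. In the first case we drop $D$ variables and incur no homological shift, so we need $(n-D)(1-1/D) + (\text{something for } x,y) \le n(1-1/D)$; since $D(1-1/D) = D - 1$, the $D$ deleted variables "pay for themselves" and the two isolated vertices $x,y$ must be accounted for by noting they contribute $0$ to the projective dimension. In the colon case, the relevant point is that whenever we take a colon by $x_j$, the variable $x_j$ and at least one neighbor get absorbed, and the homological degree does not rise (by Lemma~\ref{lyu} the projective dimension is exactly preserved, not merely bounded), so the inductive count goes through. I expect the \emph{main obstacle} to be the bookkeeping in the colon case: one must verify that after forming $(I,x_1,\ldots,x_{j-1})\colon x_j$, enough variables are genuinely removed (not just formally added) so that the resulting ideal sits in a polynomial ring on few enough variables for the induction, and that the maximal-edge-degree hypothesis is preserved along this colon chain. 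This requires carefully analyzing which generators of the form $x_jz$ force $z$ into the colon ideal as a linear (degree-one) generator, effectively deleting $z$, and confirming that the number of such forced deletions is at least what the arithmetic demands.
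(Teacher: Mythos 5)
Your overall strategy --- induction on $n$, applying Lemma~\ref{lyu} to a set of neighbors of a maximal-degree edge, and splitting into the ``adjoin all of $\Lambda$'' case and the colon case --- is exactly the paper's, but your choice $\Lambda = N(x)\cup N(y)$ with $|\Lambda|=D$ creates a genuine gap in both branches. First, with your definition $x$ and $y$ themselves lie in $\Lambda$ (each is adjacent to the other), so they do \emph{not} survive as isolated vertices: they are adjoined to the ideal and each contributes $1$ to the projective dimension. The arithmetic in your first case then reads $\pd(S/I(G)) = D + \pd(S/I(G-\Lambda)) \leq D + (n-D)\left(1-\tfrac{1}{D}\right) = n\left(1-\tfrac{1}{D}\right)+1$, overshooting the target by exactly $1$: each deleted variable costs $1$ but only buys $1-\tfrac{1}{D}$ in the bound, so $D$ deletions leave a deficit of $D\cdot\tfrac{1}{D}=1$. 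Your claim that the deleted variables ``pay for themselves'' is precisely the false inequality $D\le D-1$. The paper avoids this by taking $\Lambda$ to be only the $d=\deg(x)$ neighbors of $x$ (with $x_1=y$): then $x$ itself is left as an isolated vertex that is \emph{not} adjoined to the ideal, the surviving graph has at most $n-d-1$ vertices, and the count $d+(n-d-1)(1-\tfrac{1}{D})\le n(1-\tfrac{1}{D})$ closes because $d\le D-1$ (indeed $x$ is a neighbor of $y$ that is not a common neighbor, so $D\ge\deg(x)+1$).

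The colon branch has a matching problem. There one uses $((I(G),x_1,\dots,x_{j-1}):x_j)=(I(G_{j-1}-\st(x_j)),\,x_1,\dots,x_{j-1},\,N_{G_{j-1}}(x_j))$, so $\pd(S/I(G)) = (j-1)+\deg_{G_{j-1}}(x_j)+\pd$ of the smaller graph, and the induction closes only if $j+\deg_{G_{j-1}}(x_j)\le D$. The paper gets this because every $x_i$ is a neighbor of $x$: the vertices $x_1,\dots,x_j$ together with the surviving neighbors of $x_j$ are distinct and all adjacent to $x$ or to $x_j$, hence number at most the degree of the edge $(x,x_j)$, which is at most $D$ by maximality. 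With $\Lambda=N(x)\cup N(y)$ the vertices $x_1,\dots,x_{j-1}$ need not all be adjacent to a common endpoint of an edge through $x_j$ (some may be adjacent only to $y$ while $x_j$ is adjacent only to $x$), so no single edge's degree dominates the count and the needed inequality can fail. You correctly flagged the colon-case bookkeeping as the main obstacle, but the fix is not finer tracking of forced deletions; it is to shrink $\Lambda$ to the open neighborhood of a single endpoint, which simultaneously supplies the ``$-1$'' from the isolated vertex in the first case and the edge-degree comparison in the second.
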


\begin{proof} We use induction on the number of vertices of $G$. If $G$ consists of two vertices and one
edge, the bound holds. Let $(x, y)$ be an edge of maximal degree $D$, and let $x_1 = y, x_2, \ldots, x_d$ be the
neighbors of $x$. We apply Lemma \ref{lyu} to this set of vertices. First consider the
case in which $\pd(S/I(G)) = \pd(S/(I(G), x_1,...,x_d)$. We observe that $(I(G),x_1, \ldots ,x_d) =
(I(G_d), x_1, \ldots ,x_d)$, where in general we set  $G_i$ to be the induced subgraph $G-x_1-x_2- \cdots -x_i$ (minus the isolated vertices, as they have no bearing on the associated ideal).  Note also that each graph $G_i$ has maximal edge degree at
most $D$. By our choice of vertices, $G_d$ is a graph on at most $n-d-1$ vertices.
By induction, $\pd(S/I(G_d))\leq (n-d-1)(1-\frac {1}{D})$, and therefore
$\pd(S/I(G)) = \pd(S/(I(G), x_1,...,x_d) = \pd(S/(I(G_d), x_1,...,x_d) = d+ \pd(S/I(G_d)) \leq 
d + (n-d-1)(1 - \frac {1}{D})\leq n(1-\frac {1}{D})$ since $d\leq D-1$. 

We must now show that the bound holds in the second case, namely when $\pd(S/I(G)) = \pd(S/((I(G),x_1,...,x_{j-1}):x_j))$ for some $1\leq j\leq d$. In this
case,
\[
((I(G),x_1,...,x_{j-1}):x_j) = (I(G_{j-1}-\st(x_j)), x_1,...,x_{j-1}, N(x_j)),
\] 
where $N(x_j)$ is the set of neighbors of
$x_j$, and thus we have that
$$\pd(S/I(G)) = \deg_{G_{j-1}}(x_j) +  j-1 + \pd(S_{G_{j-1}-\st(x_j)}).$$ Note that $G_{j-1}-\st(x_j)$  has at most $n-j-\deg_{G_{j-1}}(x_j)$ vertices, so that by induction,
\[
\pd(S/I(G)) \leq j-1 + \deg_{G_{j-1}}(x_j)+  (n-j-\deg_{G_{j-1}}(x_j))\left(1-\frac {1}{D}\right)\leq n\left(1-\frac {1}{D}\right),
\] 
where the last inequality follows
since $j + \deg_{G_{j-1}}(x_j) \leq D$ by considering the degree of the edge $(x, x_j)$ in $G$.
\end{proof}

\begin{eg}{\rm
The bounds in Theorem \ref{boundpd} are sharp for the complete bipartite graph $G = K_{i,d}$. In this case, $n = i+d, D = i+d$, and
$\pd(S_G) = i + d-1$, which follows easily from the short exact sequence
$$0\rightarrow S_G\rightarrow S/P\oplus S/Q\rightarrow k\rightarrow 0,$$
where $P$ is the ideal generated by the $i$ variables on one side of the bipartite graph $G$, and $Q$ is generated by
the $d$-variables from the other side.
}\end{eg}

An analysis of the proof of Theorem \ref{boundpd} shows that we can often improve the result under other assumptions. 
This is due to the fact that if $\pd(S/I(G)) = \pd(S/(I(G),x_1, \ldots ,x_d)$, then in general one can often get better bounds.
On the other hand, the graphs corresponding to removing several vertices and then removing the star of another vertex may,
under suitable assumptions, have a much smaller high degree edge or vertex. We illustrate this principle in our
next theorem, which generalizes \cite[Corollary 5.3]{DE}.

\begin{theorem}\label{clawfreepd}
Let $G$ be a graph on $n$ vertices. Let $C$ be the maximum value of $( d + \left\lfloor\frac{e}{2}\right\rfloor + 1)$ where
$d$ (respectively $e$) runs through the set of all degrees of all vertices $x$ (respectively $y$) such that $(x, y)$ is an edge of $G$.
If $G$ is claw-free, then $\pd(S_G)\leq n\left(1-\frac{1}{C}\right)$. 
\end{theorem}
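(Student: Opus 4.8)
The plan is to mimic the proof of Theorem \ref{boundpd} closely, again inducting on the number of vertices and invoking Lemma \ref{lyu}, but to exploit the claw-free hypothesis to get a better accounting in the branch where we take a colon ideal. First I would pick an edge $(x,y)$ realizing the maximum $C = \deg(x) + \lfloor \deg(y)/2 \rfloor + 1$, set $d = \deg(x)$, and list the neighbors $x_1 = y, x_2, \ldots, x_d$ of $x$. Applying Lemma \ref{lyu} to $\Lambda = \{x_1, \ldots, x_d\}$ splits into two cases exactly as before. The ``addition'' case, where $\pd(S/I(G)) = \pd(S/(I(G), x_1, \ldots, x_d))$, should go through verbatim as in Theorem \ref{boundpd}: after deleting $x_1, \ldots, x_d$ (and the now-isolated $x$), the induced subgraph $G_d$ is again claw-free (an induced subgraph of a claw-free graph is claw-free), has its invariant bounded by $C$, and lives on at most $n - d - 1$ vertices, so induction plus $d \le C - 1$ closes this case.

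The interesting case is the colon case, $\pd(S/I(G)) = \pd(S/((I(G), x_1, \ldots, x_{j-1}) \colon x_j))$ for some $j$. As in Theorem \ref{boundpd} this equals $\deg_{G_{j-1}}(x_j) + (j-1) + \pd(S_{G_{j-1} - \st(x_j)})$, and the graph $H := G_{j-1} - \st(x_j)$ is claw-free and has at most $n - j - \deg_{G_{j-1}}(x_j)$ vertices. Here the key gain should come from claw-freeness: since $x_1, \ldots, x_{j-1}, x_j$ are all neighbors of $x$ in $G$, and $G$ has no induced claw, the set $\{x_1, \ldots, x_d\}$ of neighbors of $x$ cannot contain three pairwise non-adjacent vertices; that is, the non-edges among $x$'s neighbors form a graph with no triangle, equivalently the neighborhood of $x$ induces a graph whose complement is triangle-free. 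I expect this to force $\deg_{G_{j-1}}(x_j)$, the number of neighbors of $x_j$ surviving after deleting $x_1, \ldots, x_{j-1}$, to be at most roughly half of $\deg_G(x_j)$, which is where the floor $\lfloor e/2 \rfloor$ enters.

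The heart of the argument will therefore be a careful counting step bounding $(j-1) + \deg_{G_{j-1}}(x_j)$ by $C - 1$, or more precisely arranging that
\[
(j-1) + \deg_{G_{j-1}}(x_j) \le C - 1,
\]
so that, writing $D' = \deg_{G_{j-1}}(x_j)$ and $N = |V(H)| \le n - j - D'$, induction gives
\[
\pd(S/I(G)) \le (j-1) + D' + N\left(1 - \tfrac{1}{C}\right) \le (j-1) + D' + (n - j - D')\left(1 - \tfrac{1}{C}\right),
\]
and the final bound $n(1 - 1/C)$ follows provided $(j-1) + D' \le C - 1$, since then the ``missing'' mass $(j + D') \cdot \frac{1}{C} \le 1$ absorbs correctly against the term $n/C$. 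To get $(j-1) + D' \le C - 1$ I would use the edge $(x, x_j)$: its relevant invariant is $\deg(x) + \lfloor \deg(x_j)/2 \rfloor + 1 \le C$, so $\deg(x_j)$ contributes at most $2(C - \deg(x) - 1)$ to its full neighbor count. Claw-freeness among the $x_i$ then limits how many of $x_j$'s neighbors can remain after removing the earlier $x_1, \ldots, x_{j-1}$, translating the floor into the needed inequality.

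The main obstacle I anticipate is making this claw-free counting rigorous: one must show that deleting $x_1, \ldots, x_{j-1}$ kills enough neighbors of $x_j$ to beat the factor of two, and the bookkeeping is delicate because $x_j$'s neighbors split into those that are also neighbors of $x$ (where the triangle-free complement structure applies) and those that are not (which must be controlled through the edge-degree of $(x, x_j)$). Pinning down exactly which vertices are deleted, and verifying the inequality $(j-1) + \deg_{G_{j-1}}(x_j) \le C - 1$ in every case, is where the real work lies; the algebraic skeleton from Theorem \ref{boundpd} is otherwise reusable almost unchanged.
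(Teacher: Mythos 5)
Your skeleton (induction on $n$, Lemma \ref{lyu} applied to the neighborhood of one endpoint of a maximizing edge, the ``addition'' case recycled from Theorem \ref{boundpd}, and the target inequality $(j-1)+\deg_{G_{j-1}}(x_j)+1\leq C$ in the colon case) matches the paper, and your observation that claw-freeness forces the non-neighbors of $x_j$ among the other $x_i$'s to form a clique is the right structural input. But the quantitative step you propose is not the one that works, and a key device is missing. First, Lemma \ref{lyu} gives you no control over which $j$ occurs, so you must \emph{pre-order} the set $\Lambda$; the paper orders the neighbors so that each $x_{i+1}$ has maximal degree in the subgraph induced by $\{x_1,\ldots,x_{i+1}\}$. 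Only with this ordering does the clique observation pay off: if $S$ is the set of non-neighbors of $x_{i+1}$ among $\{x_1,\ldots,x_i\}$ and $|S|=m$, then $S$ being a clique forces each of its vertices to have degree $\geq m-1$ in that induced subgraph, so maximality of $x_{i+1}$ gives $\delta\geq m-1$, which with $\delta=i-m$ yields $\delta\geq\frac{i-1}{2}$. Without the ordering, $m$ can be as large as $i$ and you get no savings at all.

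Second, the savings this produces is \emph{half of the number of deleted vertices}, not half of $\deg(x_j)$: one shows $\deg_{G_{j-1}}(x_j)\leq\deg(x_j)-\frac{j-2}{2}$, and then, using $j-1\leq d-1$, the quantity $(j-1)+\deg_{G_{j-1}}(x_j)+1$ is bounded by $\deg(x_j)+\lfloor d/2\rfloor+1\leq C$ --- i.e., the instance of the defining inequality for the edge $(x_j,x)$ with $\deg(x_j)$ in the \emph{un-floored} slot and the center's degree halved. Your plan inverts this: you invoke $\deg(x)+\lfloor\deg(x_j)/2\rfloor+1\leq C$ and hope to show that $\deg_{G_{j-1}}(x_j)$ is at most about half of $\deg(x_j)$. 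That is false in general: $x_j$ may have almost all of its neighbors outside $N(x)$ (take $N(x)=\{y,x_j\}$ with $y\not\sim x_j$ and $x_j$ sitting in a large clique --- still claw-free), in which case deleting $x_1,\ldots,x_{j-1}$ removes essentially none of $x_j$'s neighbors. So the ``careful counting step'' you defer is not mere bookkeeping; it requires both the max-degree reordering and flipping which degree gets halved in the appeal to the definition of $C$.
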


\begin{proof} First observe that this maximum value $C$ can never increase for induced subgraphs. We induct on $n$
to prove the statement. Fix an edge $(x, y)$ where the maximum value $C$ is obtained (that is, $\deg(x) = d, \deg(y) = e$). 
Let $\{x_1, \ldots ,x_{e}\}$ be the neighbors of $y$. We reorder these vertices as follows: Let $x_e$ be a vertex of maximal degree in the induced subgraph of $G$ with vertex set $\{x_1, \ldots, x_e\}$, and in general choose $x_i$ to be a vertex of maximal degree in the subgraph of $G$ induced by the vertices $\{x_1, x_2, \ldots, x_i\}$. We apply Lemma \ref{lyu} to the ideal $I = I(G)$ and the set of variables $\{x_1, \ldots , x_{e}\}$.   

If $\pd(S/I(G)) = \pd(S/(I(G), x_1, \ldots ,x_e))$, then by induction on $n$ (and using that 
$(I(G), x_1,\ldots ,x_e)) = (I(H), x_1, \ldots ,x_e)$, where $H = G - x_1 - x_2 - \cdots - x_e$), we have that 
\[
\pd(S/I(G))\leq (n-e-1)\left(1 - \frac{1}{C}\right) + e\leq n\left(1 - \frac{1}{C}\right).
\]
Here, note that $H$ involves only $(n-e-1)$ variables since we have isolated $y$ after deleting its
neighbors. The last inequality follows since $e\leq e + \lfloor\frac{d}{2}\rfloor + 1\leq C$ by the choice of $C$. 

Otherwise, by Lemma \ref{lyu}, we have
\[
\pd(S/I(G)) = \pd(S/((I(G),x_1, \ldots ,x_i):x_{i+1}))
\] 
for some $0\leq i\leq e-1$ (recall we set $x_0 = 0$).

Let $G'$ be the induced subgraph of $G$ with vertex set $x_1, x_2, \ldots, x_{i+1}$, and let $\delta$ be the degree of $x_{i+1}$ in $G'$.  Let $S$ be the subset of $\{x_1, x_2, \ldots, x_i\}$ of non-neighbors of $x_{i+1}$, and set $|S| = m$.  Since $G$ is claw free, the subgraph induced by $S$ must be complete (otherwise two non-neighbors of this set, together with $x_{i+1}$ and $y$, would form a claw).  Because each $x_j\in S$ has degree $\geq m-1$ in $G'$ and $x_{i+1}$ was chosen as a vertex of maximal degree, we have $\delta \geq m -1$.  Since $\delta = i - m$, this yields $\delta \geq \frac{i-1}{2}$.

Let $d_{i+1}$ be
the degree of $x_{i+1}$ in $G-\{x_1,...,x_i\}$, and observe that $d_{i+1}\leq \deg(x_{i+1}) -\delta = \deg(x_{i+1})-\frac{i-1}{2} $.
Let $H$ be the induced graph determined by deleting $x_1,\ldots ,x_i$ from $G$,
and then deleting $x_{i+1}$ and all its neighbors from $G$.

We then have
\begin{align*}
\pd(S/I(G)G) &= \pd(S/((I(G),x_1,...,x_i):x_{i+1}))\\ 
&= \pd(S/(I(H)) + i + d_{i+1} \\
&\leq (n-i-d_{i+1}-1)\left(1-\frac{1}{C}\right) + i + d_{i+1} \\
&\leq n\left(1-\frac{1}{C}\right),
\end{align*}
where the last inequality follows because $i + d_{i+1} + 1 \leq C$, which we show below.
\begin{align*}
i+d_{i+1} + 1 &\leq i + \left(\deg(x_{i+1}) -  \frac{i-1}{2}\right) + 1 \\
&= \deg(x_{i+1}) +  \frac{i-1}{2} + 2. \\
& \leq \deg(x_{i+1}) + \frac{e-2}{2} +2 \\
& = \deg(x_{i+1}) +\frac{e}{2} + 1,
\end{align*}
where we use the fact that $i\leq e-1$.  Because $i+ d_{i+1} +1 $ is an integer, it must be less than or equal to $\deg(x_{i+1}) + \left\lfloor\frac{e}{2}\right\rfloor + 1$.  Since $(x_{i+1}, y)$ is an edge of $G$, this quantity is less than or equal to $C$.  
\end{proof}

\medskip

An immediate corollary is a result due essentially to Dochtermann and Engstr\"om \cite[Corollary 5.2]{DE}, though
they have a (small) extra additive term in their bound.

\begin{corollary}\label{engstrom} Let $G$ be a graph on $n$ vertices, and assume that $d$ is the maximal degree of any vertex. Then
$\pd(S/ I(G))\leq n\left(1-\frac{1}{2d}\right)$.
\end{corollary}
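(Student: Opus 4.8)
The plan is to deduce this directly from Theorem \ref{boundpd}, whose estimate $\pd(S/I(G)) \leq n(1 - 1/D)$ is phrased in terms of the maximal \emph{edge} degree $D$. The only real work is to bound $D$ in terms of the maximal \emph{vertex} degree $d$, after which monotonicity of the function $t \mapsto n(1 - 1/t)$ finishes the argument.

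First I would unwind the definition of edge degree. For any edge $(x,y)$ of $G$, its degree is $\deg(x) + \deg(y) - |A|$, where $A$ is the set of common neighbors; as noted just after the definition, this counts precisely the vertices adjacent to $x$ or to $y$, so it equals $|N(x) \cup N(y)| \leq \deg(x) + \deg(y)$. Since both $\deg(x)$ and $\deg(y)$ are at most $d$, every edge of $G$ has degree at most $2d$, and therefore $D \leq 2d$.

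Finally, because $t \mapsto n(1 - 1/t) = n - n/t$ is increasing for $t > 0$, Theorem \ref{boundpd} gives
\[
\pd(S/I(G)) \leq n\left(1 - \frac{1}{D}\right) \leq n\left(1 - \frac{1}{2d}\right),
\]
as claimed. I expect no genuine obstacle: once Theorem \ref{boundpd} is available, the argument is pure bookkeeping. The only point meriting a word of care is the degenerate situation where $G$ has no edges (so the bound is vacuous or the vertex degree is zero); under the standing hypothesis that $G$ has no isolated vertices we have $d \geq 1$, so the right-hand side is well-defined and the estimate above applies verbatim.
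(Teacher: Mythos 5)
Your proposal is correct and follows exactly the paper's own argument: the proof of Corollary \ref{engstrom} simply invokes Theorem \ref{boundpd} after observing that every edge degree is at most $2d$. Your additional remarks on monotonicity and the edgeless case are harmless elaborations of the same one-line deduction.
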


\begin{proof} 
The corollary follows at once from Theorem \ref{boundpd} since the degree of any edge of $G$ is clearly at most
$2d$.
\end{proof}

\begin{remark}\label{faltings}{\rm It is worth comparing the result of Theorem \ref{boundpd} to similar bounds on the cohomological dimension
of $I(G)$. It is known that for monomial ideals $I$, the cohomological dimension $\cd(I,S)$ satisfies $\cd(I, S) =\pd(S/I)$. This
can be proved, for example, from  \cite[Theorem 0.2]{EMS}. Faltings \cite{F} gave
the most general bound for cohomological dimension, namely that
\[
\cd(I,S) \leq n -\left\lfloor \frac{n-1}{b}\right\rfloor
\]
for any ideal $I$ in a polynomial ring $S$ of dimension $n$, where $b$ is the bigheight of $I$. If $I(G)$ is the edge ideal
of a graph $G$, then it is easy to see that $d\leq b$:  If $x$ has degree $d$ with neighbors $x_1, \ldots ,x_d$, then
the fact that $x x_i\in I(G)$ for all $i$ shows that there exists a minimal prime of $I(G)$ containing $(x_1, \ldots ,x_d)$. The bound
on projective dimension we give is not completely comparable to the bound given by Faltings. 
If $d$ is close to $b$, then the bound of Faltings may be better. However, it is possible
that $d$ is much smaller than $b$ (consider a cycle, for example).  In \cite{newpaper} we further explore bounds for the projective dimension of edge ideals.}
\end{remark}

We can give fairly strong bounds relating the number of vertices in a graph and the maximum degree of an edge by combining
Theorems \ref{log} and \ref{boundpd}.

\begin{corollary}  Let $G$ be a graph such that $I(G)$ is $k$-steps linear for some $k \geq 1$, and let the regularity of
$I(G)$ be obtained at a multidegree $\sigma$, with support $H\subset G$. We consider $H$ as an induced subgraph of $G$,
and let $m$ be the number of vertices of $H$, and let $D$ be the maximal degree of an edge in $H$, and $d$ the maximal degree
of a vertex in $H$. Then
$$D \geq \frac{m}{\log_{\frac{k+4}{2}}\left(\frac{d}{k+1}\right) +3}. $$
\end{corollary}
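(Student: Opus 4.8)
The plan is to reduce the entire statement to the induced subgraph $H$ and then feed $H$ into the two main theorems of the paper: Theorem \ref{boundpd} controls its projective dimension through the maximal edge degree $D$, while Theorem \ref{log} controls its regularity through the maximal vertex degree $d$. Since $H$ is an induced subgraph of $G$ and $I(G)$ is $k$-steps linear, Corollary \ref{removeinduced} guarantees that $I(H)$ is again $k$-steps linear, so both theorems are available for $H$.

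First I would translate the hypothesis ``$\reg(I(G))$ is obtained at $\sigma$ with support $H$'' into information about $H$ itself. Writing $\beta_{j,\sigma}(I(G)) \neq 0$ with $|\sigma| = m$ and $|\sigma| - j = \reg(I(G))$, I would invoke the restriction property of multigraded Betti numbers for induced subgraphs (the same principle underlying Corollary \ref{removeinduced}): at the squarefree multidegree $\sigma$ supported on $V(H)$, the Betti number of $I(G)$ equals that of $I(H)$. Hence $\beta_{j,\sigma}(I(H)) \neq 0$, so $\reg(I(H)) \geq m - j = \reg(I(G))$; conversely every squarefree multidegree of $I(H)$ is also one of $I(G)$, forcing $\reg(I(H)) = \reg(I(G))$. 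Thus the regularity of $G$ is genuinely realized inside $H$.

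Next I would extract two opposing inequalities from this single nonzero Betti number. On the homological side, $\beta_{j,\sigma}(I(H)) \neq 0$ gives $j \leq \pd(I(H)) = \pd(S/I(H)) - 1$; writing $p = \pd(S/I(H))$ this reads $p \geq j + 1 = m - \reg(I(G)) + 1$. On the other side, Theorem \ref{boundpd} applied to $H$ (which has $m$ vertices and maximal edge degree $D$) gives $p \leq m\left(1 - \tfrac{1}{D}\right)$. Combining the two yields $m - \reg(I(G)) + 1 \leq m - \tfrac{m}{D}$, that is $\tfrac{m}{D} \leq \reg(I(G)) - 1$, equivalently $D \geq \tfrac{m}{\reg(I(G)) - 1}$.

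Finally, Theorem \ref{log} applied to $H$ (with maximal vertex degree $d$) gives $\reg(I(H)) \leq \log_{\frac{k+4}{2}}\!\left(\tfrac{d}{k+1}\right) + 3$; call this bound $B$. By the first step $\reg(I(G)) = \reg(I(H)) \leq B$, and since $\reg(I(G)) \geq 2$ for any graph with an edge we have $0 < \reg(I(G)) - 1 < \reg(I(G)) \leq B$, so $\tfrac{m}{\reg(I(G)) - 1} \geq \tfrac{m}{B}$. Chaining this with the inequality of the previous paragraph gives $D \geq \tfrac{m}{B}$, which is exactly the claim. I expect the only genuinely delicate step to be the bookkeeping in the second paragraph: correctly using the restriction of multigraded Betti numbers to identify $\reg(I(H))$ with $\reg(I(G))$, and converting the homological degree $j$ of the regularity-achieving Betti number into the lower bound $p \geq m - \reg(I(G)) + 1$ for $\pd(S/I(H))$. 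Once these are in place, the application of the two theorems and the final arithmetic are routine.
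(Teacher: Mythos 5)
Your proposal is correct and follows essentially the same route as the paper: identify $\reg(I(H))=\reg(I(G))$ via restriction of multigraded Betti numbers, use the regularity-achieving Betti number to get $\pd(S/I(H)) \geq m - \reg + 1$, bound $\pd(S/I(H))$ above by $m(1-\tfrac{1}{D})$ via Theorem \ref{boundpd}, and bound $\reg$ by Theorem \ref{log} (the paper combines the inequalities in a single chain $m \leq \pd + \reg$, while you first derive $D \geq m/(\reg-1)$ and then weaken, but this is only a reorganization). Your intermediate inequality is in fact marginally sharper than the stated conclusion, and all the bookkeeping is sound.
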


\begin{proof}  We have that $l =\reg(I) = \reg(I_H)$, so henceforth we work just in the graph $H$. Suppose the
regularity occurs at homological degree $j$ and internal degree $m$ (by our choice of $H$), so that $m-j = l$. Then
$\reg(I(H)) + \pd(S_H) = l + \pd(S_H)\geq l + j = m$.

Since the multigraded Betti
numbers of $H$ are at most that of $G$, it follows that $I(H)$ is also  $k$-steps linear. By Theorem \ref{log},
$$\reg(H) \leq \log_{\frac{k+4}{2}}\left(\frac{d}{k+1}\right) +3.$$ On the other hand, by Theorem \ref{boundpd}, 
$\pd(S_H)\leq m(1-\frac {1}{D})$. By above,  $\pd(S_H) + \reg(H)\geq m$, and combining these inequalities yields
$$m\left(1-\frac {1}{D}\right) + \log_{\frac{k+4}{2}}\left(\frac{d}{k+1}\right) +3\geq m,$$ or equivalently
$$D \geq \frac{m}{\log_{\frac{k+4}{2}}\left(\frac{d}{k+1}\right) +3},$$ as claimed.
\end{proof}

\section{Some open questions}\label{open}
\medskip

In this Section we propose and discuss some further directions for this research. For convenience we will always assume that $I$ is a squarefree monomial
ideal inside $S = K[x_1,\cdots, x_n]$, and $G$ is a finite simple graph on $n$ vertices.

The question immediately suggested by Theorem \ref{mainThm} is whether its bound is asymptotically the best possible.  Note that we do not even know if the regularity of gap-free graphs is unbounded. The biggest regularity known currently is $5$, see \cite{NP} and \cite[Question 1.5]{W}.

\begin{question}
Is there an infinite family of gap-free graphs such that $\reg(I(G)) = O(\ln(n))$ for every $n$-vertex $G$ in this family? 
\end{question}

Given our results, it seems natural to ask if similar bounds can be obtained for arbitrary squarefree monomial ideals. 

\begin{question}\label{qreg}
Let $I \subset S$ be a square-free monomial ideal which is $k$-steps linear for some $k\geq 1$.  Is there an upper bound
for $\reg(I)$ similar to the one in Theorem \ref{mainThm}? 
\end{question}

\begin{discussion} \label{dreg}

{\rm In Question \ref{qreg}, it is likely that one has to increase $k$ to get good bounds. For example, consider the case in which $I$ is generated by cubics.
Let $G$ be any graph and $J= I(G)$. Construct an ideal $I$ as follows. Let $I$ be generated by the square-free cubics of $\m J$, where $\m = (x_1,\cdots, x_n)$.
Then $J$ is always $1$-step linear, yet the regularity of $I$ and $J$ coincide (We thank David Eisenbud for this observation). Thus, for at least the cubic case, one may need to assume at least
$2$-step linearity to get strong (logarithmic) bounds on the regularity. 

To be more specific, let $n=2l$ and let $G$ be the disjoint union of $l$ edges. Then $\reg(I(G)) = l+1$.
Let $J$ be the square-free cubics of $\m I(G)$ and $L=J^{\vee}$, the Alexander dual of $J$. Then we know $J$ is $1$-step linear, $S/L$ is $S_2$,
and $\reg(J) = \pd(S/L)
= l +1= \frac{n}{2}+1$.
Thus any upper bound better than a linear one cannot be obtained without stronger assumptions.
} 

\end{discussion}

It is worth stating the dual version of Question \ref{qreg}. In view of Discussion \ref{dreg}, we pose the following. 

\begin{question}
Let $I \subset S$ be a square-free monomial ideal of height $c$ such that $S/I$ satisfies $S_k$ for some $k\geq c$. Is there a logarithmic (in $n$) upper bound on $\pd(S/I)$? 
\end{question}

In fact, one may ask a similar question for ideals that are not necessarily monomial (see, for instance, Question \ref{ques1}).  We shall discuss here a version that is  closer to the monomial case. Suppose $S$ has characteristic $p$. Recall that a Noetherian commutative ring $R$ of characteristic $p>0$ is called $F$-{\it{pure}} if the Frobenius morphism $R \to R$ is a pure morphism.  If $I$ is a monomial ideal in $S$, it is immediately seen that $S/I$ is $F$-pure. What is less well-known is that if $S/I$ is $F$-pure, then $\cd(I,S) = \pd(S/I)$, see \cite[Theorem 4.1]{SW}.  This suggests the following question.

\begin{question}
Let $S$ be  a regular local ring or polynomial ring over a field of characteristic $p>0$ and let $I \subset S$ be an ideal such that $S/I$ is $F$-pure. Let $c$ be the codimension of $I$ and $n=\dim S$, and assume that $S/I$ satisfies  $S_k$ for some $k\geq c$.  Is there an upper bound on $\cd(I,S) = \pd(S/I)$ similar to the one in Corollary \ref{mainCor}? 

\end{question}

A weaker and more geometric version of this Question was posted by the first author on Mathoverflow.net (\cite{DMO}).

Finally, we discuss a related question motivated by an interesting result of Kalkbrenner and Sturmfels. They proved that under any monomial ordering, the reduced initial ideal of a homogeneous prime always yields a Stanley-Reisner complex which
is both pure and strongly connected (note that the $S_2$ condition is equivalent to the
corresponding Stanley-Reisner complex being locally strongly connected and pure).  So, it is natural to wonder whether one can extend the result by  Kalkbrenner and Sturmfels to give the same conclusion
locally.

\begin{question}\label{qKS}

Let $J \subset S$ be a homogenous prime ideal. Under what conditions  can one find a monomial ordering such that the reduced initial ideal $I$ of $J$ with respect to such an ordering satisfies the condition that $S/I$ is $S_2$? 

\end{question}

Our bounds on regularity show that if the initial ideal of a
homogeneous ideal $J$ in a polynomial ring $S$
is reduced and $S_2$, then the depth of $S/J$ must be extremely high, on the order of $\dim(S)-3- \text{log}(\dim S)$. In some sense,
our main result on regularity proves that initial ideals which are reduced and $S_2$ are nearly Cohen-Macaulay.
See \cite{V} for interesting results concerning the relation of cohomological dimension and other invariants between an ideal and the associated generic initial ideal.

\end{document}